\newtheorem{theorem}{Theorem}[section]
\newtheorem{corollary}[theorem]{Corollary}
\newtheorem{lemma}[theorem]{Lemma}
\newtheorem{question}[theorem]{Question}
\newtheorem{example}[theorem]{Example}
\newtheorem{definition}[theorem]{Definition}
\newtheorem*{intdef}{Definition}
\newtheorem{remark}[theorem]{Remark}
\newtheorem{prop}[theorem]{Proposition}
\newtheorem{claim}[theorem]{Claim}
\newtheorem{obs}[theorem]{Observation}
\newenvironment{Example}{\begin{example}\rm}{\end{example}}
\newenvironment{Definition}{\begin{definition}\rm}{\end{definition}}
\newenvironment{Remark}{\begin{remark}\rm}{\end{remark}}
\newenvironment{Claim}{\begin{claim}\rm}{\end{claim}}
\newenvironment{Obs}{\begin{obs}\rm}{\end{obs}}
\newenvironment{Question}{\begin{question}\rm}{\end{question}}
\def\et{\;\mbox{and}\;}
\def\s{{\sigma}}
\def\so{{\sigma_\omega}}
\def\So{\widetilde{\sigma_\omega}}
\def\u{\upsilon}
\def\U{\Upsilon}
\def\Ut{\Upsilon_t}
\def\HUt{
\widetilde{\Upsilon(t)}}
\def\l{{{\ell}}}
\def\for{\quad\mbox{for }}
\def\all{\quad\mbox{all }}
\def\et{\quad\mbox{and}\quad}
\def\epsilon{\varepsilon}
\def\N{\mathbb{N}}
\def\R{\mathbb{R}}
\def\Z{\mathbb{Z}}
\def\C{\mathbb{C}}
\begin{document}
\title
{On cobordisms between knots, braid index, and the Upsilon-invariant}
\author{Peter Feller}

\address{Max Planck Institute for Mathematics, Vivatsgasse 7, 53111 Bonn, Germany}
\email{peter.feller@math.ch}
\author{David Krcatovich}

\address{Rice University, Department of Mathematics, Houston, TX 77047, US}
\email{dk27@rice.edu}

\thanks{The first author gratefully acknowledges support by the Swiss National Science Foundation Grant 155477. The second author was partially supported by NSF grant DMS-1309081.}
\subjclass[2010]{57M25,  57M27}
\begin{abstract}
We use 
Ozsv\'ath, Stipsicz, and Szab\'o's Upsilon-invariant to provide bounds on cobordisms between knots that `contain full-twists'. 
In particular, we recover and generalize a classical consequence of the Morton-Franks-Williams inequality for knots: positive braids that contain a positive full-twist realize the braid index of their closure. We also establish that quasi-positive braids that are sufficiently twisted realize the minimal braid index among all knots that are concordant to their closure. Finally, we 
provide inductive formulas for the Upsilon-invariant of torus knots and compare it to the Levine-Tristram signature profile.
\end{abstract}
\maketitle
\section{Introduction}
This article is concerned with the study of \emph{knots} in the 3-sphere $S^3$---smooth oriented embeddings of the circle $S^1$ considered up to ambient isotopy. A classical theorem of Alexander states that every knot arises as the closure of an $n$-braid for some positive integer $n$~\cite{Alexander_23_ALemmaOnSystemsOfKnottedCurves}. Here an \emph{$n$-braid} is an element of Artin's braid group on $n$-strands $B_n$~\cite{Artin_TheorieDerZoepfe}. Alexander's result naturally leads to the definition of the \emph{braid index} of a knot $K$---the minimal positive integer $n$ such that $K$ arises as the closure of an $n$-braid. In general, the braid index is difficult to compute even for simple families of knots.

The main result of this article relates the braid indices of two knots to the minimal genus of cobordisms between them---their \emph{cobordism distance}. As a consequence, we reprove and generalize a classical consequence of the Morton-Franks-Williams inequality~\cite{Morton_86,Franks_Williams_87_BraidsAndTheJonesPolynomial} and we find that most torus knots minimize the braid index among all knots concordant to them; see Theorem~\ref{thm:concbraidindex}.
In fact, to the authors' knowledge these constitute the first example of an infinite family of concordance classes for which the minimal braid index is unbounded. Our results also yield obstructions to optimal and algebraic cobordisms between knots, which was the original motivation for our study.
Before stating our results, we recall some notions surrounding knot concordance.

For a knot $K$ in $S^3$, let $g_4(K)$ denote the \emph{slice genus}\textemdash the minimal genus of a properly embedded smooth oriented
surface in $B^4$ with boundary $K$. This generalizes the notion of a \emph{slice knot}---a knot $K$ with $g_4(K)=0$---due to Fox. More generally, the cobordism distance $d(K,T)$ between two knots $K,T$ is defined by $g_4(K\sharp m(T))$, where $\sharp$ denotes the connected sum of knots and $m(T)$ is the mirror image of $T$ with reversed orientation. Knots $K$ and $T$ are said to be \emph{concordant} if $g_4(K\sharp m(T))=0$. While this notion depends on the orientation of the involved knots, the invariants dealt with in this paper are preserved under orientation reversal, so we will hereafter neglect to mention orientations.
The name `cobordism distance' is justified by the fact that $d$ descends to a metric on the concordance group
\[\mathfrak{C}=\{\text{isotopy classes of oriented knots}\}/K\sim T \text{ iff }g_4(K\sharp m({T}))=0.\]
In particular, for all knots $K$ and $T$, $d$ satisfies the 
triangle inequality
\begin{equation}\label{eq:tri}|g_4(K)-g_4(T)|\leq d(K,T)=g_4(K\sharp m({T}))\leq g_4(K)+g_4(T)
.\end{equation}

In this text, we use Ozsv\'ath, Stipsicz, and Szab\'o's
$\Upsilon$-invariant to improve the triangle inequality~\eqref{eq:tri} by a term depending on the braid indices when the involved knots are \emph{quasi-positive}
---knots $K$ for which there exist positive integers $n$ and $l$ such that $K$ is the braid closure $\widehat{\beta}$ of an $n$-braid $\beta$
given as the product of $l$ conjugates of the standard generators $a_i$ of the braid group on $n$ strands.
Quasi-positive knots are a natural class of knots (they are precisely the knots that arise as transversal intersections of algebraic curves in $\C^2$ with the unit sphere $S^3$~\cite{Rudolph_83_AlgFunctionsAndClosedBraids,BoileauOrevkov_QuasiPositivite}) that generalize positive knots, braid positive knots, algebraic knots, and torus knots. 
Based on the local Thom conjecture~\cite{KronheimerMrowka_Gaugetheoryforemb}, Rudolph establishes that
the slice genus of a quasi-positive knot $K$ is given as
$g_4(K)=\frac{l-n+1}{2}$~\cite{rudolph_QPasObstruction}.
Knowing $g_4(K)$ and $g_4(L)$ makes bounds for $d(K,L)$ in terms of $g_4(K)$ and $g_4(L)$ interesting.


\begin{theorem}\label{thm:unknottingtotorusknotandbraidindex}
Let $K$ and $L$ be quasi-positive knots, denote the braid index of $K$ by $m$.
If $L$ is the closure of an $n$-braid of the form $(a_1a_2\cdots a_{n-1})^{nk+1}\alpha$, where $n$ and $k$ are positive integers and $\alpha$ is a quasi-positive braid,
then
\begin{equation}\label{eq:imprtri}k(n-m)+g_4(K)-g_4(L)\leq d(K,L)
.\end{equation}
The inequality~\eqref{eq:imprtri} also holds if $L$ is the closure of an $n$-braid $(\Delta^2)^k\alpha$, where $\alpha$ is a positive braid and $\Delta^2$ denotes the full-twist $(a_1a_2\cdots a_{n-1})^{n}$.
\end{theorem}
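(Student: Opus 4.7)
The plan is to use the Oszv\'ath--Stipsicz--Szab\'o $\Upsilon$-invariant, which is additive under connected sum and satisfies the slice-genus bound $|\Upsilon_J(t)| \leq g_4(J)\cdot t$ for $t \in [0,1]$. Applied to $J = K \sharp m(L)$, these give, for any $t_0 \in (0,1]$,
\begin{equation*}
d(K, L) \;\geq\; \frac{\Upsilon_L(t_0) - \Upsilon_K(t_0)}{t_0}.
\end{equation*}
I would evaluate this at $t_0 = 2/m$ (the case $m=1$, i.e.\ $K$ an unknot, follows directly from the slice-genus bound applied to $L$). With the two matching estimates on $\Upsilon_K(2/m)$ and $\Upsilon_L(2/m)$ described below, the desired inequality $d(K,L) \geq k(n-m) + g_4(K) - g_4(L)$ emerges.

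The first estimate, for $K$ quasi-positive of braid index $m$, is $\Upsilon_K(2/m) = -g_4(K)\cdot 2/m$: that is, $\Upsilon_K$ saturates the slice-genus bound at $t_0 = 2/m$. The initial slope of $\Upsilon_K$ at $t=0$ is $-\tau(K) = -g_4(K)$ from quasi-positivity, and I expect this slope to persist on the whole interval $[0, 2/m]$ by a constraint on the filtered Heegaard Floer complex coming from the $m$-braid presentation of $K$. The second estimate, for $L$ the twisted quasi-positive closure in the theorem, is
\begin{equation*}
\Upsilon_L(2/m) \;\leq\; -\tfrac{2}{m}\bigl(g_4(L) - k(n-m)\bigr),
\end{equation*}
i.e.\ $\Upsilon_L(2/m)$ lies strictly above its slice-genus bound by at least $\tfrac{2}{m}\cdot k(n-m)$ whenever $m < n$.

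I would establish the second estimate by induction on $k$. The base case $k = 0$ applies the first estimate to $L_0 = \widehat{(a_1\cdots a_{n-1})\alpha}$, whose quasi-positive braid realises it on $n$ strands. The inductive step asks that adding one full twist $\Delta^2 = (a_1 \cdots a_{n-1})^n$ to the braid shifts $-\Upsilon_L(2/m)$ upward by exactly $\tfrac{2}{m}\bigl(n(n-1)/2 - (n-m)\bigr)$, matching the increase $n(n-1)/2$ in $g_4(L)$ modulated by a deficit of $(n-m)$. Combining the two estimates at $t_0 = 2/m$ and substituting into the main inequality produces~\eqref{eq:imprtri}. The second case of the theorem, with $(\Delta^2)^k\alpha$ and $\alpha$ positive, follows by the same argument, positivity of $\alpha$ standing in for quasi-positivity in the base case.

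The main obstacle is the inductive step above, namely the precise \emph{deficit per full twist}. The standard cobordism bound on $\Upsilon$ only tells us that adding one full twist shifts $-\Upsilon_L(2/m)$ upward by at most $\tfrac{2}{m}\cdot n(n-1)/2$; the theorem requires that the actual shift is strictly smaller, by exactly $\tfrac{2}{m}(n-m)$. Pinning down this deficit requires a quantitative Upsilon-computation for twisted quasi-positive braid closures that is sensitive to $m$, and is most naturally handled via the paper's promised inductive formulas for $\Upsilon$ of the torus knots $T(n, nk+1)$, which furnish the universal model for how a full twist on $n$ strands acts on the Upsilon function at rational parameters of the form $2/m$.
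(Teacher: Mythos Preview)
Your high-level strategy---evaluate $\Upsilon$ at $t_0=2/m$ and combine two estimates for $\Upsilon_K(2/m)$ and $\Upsilon_L(2/m)$---is exactly the paper's. But both of your key estimates are left as gaps, and in each case the missing ingredient is the same: an \emph{optimal cobordism to a torus knot}, not an induction or a Heegaard--Floer structural constraint.

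For the first estimate, the sentence ``I expect this slope to persist on $[0,2/m]$ by a constraint on the filtered Heegaard Floer complex coming from the $m$-braid presentation'' is not an argument, and no such direct constraint is known. The paper instead proves $\Upsilon_K(t)=-t\,g_4(K)$ for $t\le 2/m$ by showing there is a cobordism from $K$ to some $T_{m,km+1}$ of genus exactly $g_4(T_{m,km+1})-g_4(K)$, then transferring the known equality $\Upsilon_{T_{m,km+1}}(t)=-t\,g_4(T_{m,km+1})$ on $[0,2/m]$ back to $K$ via the slice-genus bound. The subtle point you are missing is that $K$ is only assumed to have braid index $m$, not to be a quasi-positive $m$-braid closure; to get a minimal braid word $\beta'$ for $K$ that still satisfies $g_4(K)=\tfrac{\ell(\beta')-m+1}{2}$ (which is what makes the cobordism to $T_{m,km+1}$ optimal), the paper invokes the generalized Jones conjecture of Dynnikov--Prasolov and LaFountain--Menasco. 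Without that input your first estimate has no proof.

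For the second estimate, your proposed induction on $k$ is not needed and, as you yourself note, the inductive step does not follow from cobordism bounds alone. The paper again uses an optimal cobordism: smoothing the conjugated generators in $\alpha$ gives a cobordism from $L$ to $T_{n,nk+1}$ of genus $g_4(L)-g_4(T_{n,nk+1})$, so $|\Upsilon_L(t)-\Upsilon_{T_{n,nk+1}}(t)|\le t\bigl(g_4(L)-g_4(T_{n,nk+1})\bigr)$. Combined with the explicit value $\Upsilon_{T_{n,nk+1}}(t)\ge -t\,g_4(T_{n,nk+1})+k(nt-2)$ for $t>2/n$ (this is where the torus-knot formula enters), one obtains directly $\Upsilon_L(2/m)\ge -\tfrac{2}{m}g_4(L)+\tfrac{2k}{m}(n-m)$. (Note your displayed inequality for $\Upsilon_L(2/m)$ has the sign reversed; the verbal description ``lies above'' is the correct direction.)
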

Theorem~\ref{thm:unknottingtotorusknotandbraidindex} unifies and generalizes two types of consequences.
Firstly, Theorem~\ref{thm:unknottingtotorusknotandbraidindex} obstructs the existence of \emph{optimal cobordisms}\textemdash cobordisms of genus equal to the difference of the slice genera of the involved knots\textemdash between quasi-positive knots $K$ and $L$ when $K$ has strictly smaller braid index than $L$, $g_4(K)\geq g_4(L)$, and $L$ is as in Theorem~\ref{thm:unknottingtotorusknotandbraidindex}. \emph{Algebraic cobordisms}\textemdash cobordisms given by the intersection of smooth algebraic curves in $\mathbb{C}^2$ with $\{(x,t)\in\C^2\ |\ a^2\leq|x|^2+|y|^2\leq b^2\}\cong S^3 \times [a,b]$\textemdash are optimal cobordisms by the Thom conjecture~\cite{KronheimerMrowka_Gaugetheoryforemb}, and are therefore also obstructed by Theorem~\ref{thm:unknottingtotorusknotandbraidindex}.
A special case that is of interest is when $K$ and $L$ are algebraic knots. In this case, Wang~\cite{Wang} independently established that no optimal cobordisms exist. 
Secondly, Theorem~\ref{thm:unknottingtotorusknotandbraidindex} can detect the braid index of knots. Indeed, by considering concordant $K$ and $L$, i.e.~$d(K,L)=g_4(K)-g_4(L)=0$, Theorem~\ref{thm:unknottingtotorusknotandbraidindex} yields
\begin{corollary}\label{cor:concordancebraidindexdetection}For all knots $L$ as in Theorem~\ref{thm:unknottingtotorusknotandbraidindex}, all quasi-positive knots $K$ concordant to $L$ have braid index at least $n$.\qed\end{corollary}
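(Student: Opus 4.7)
The plan is to deduce the corollary directly from Theorem~\ref{thm:unknottingtotorusknotandbraidindex} by substituting the two concordance hypotheses into inequality~\eqref{eq:imprtri}. First, recall that if $K$ is concordant to $L$, then by the definition of concordance we have $d(K,L)=g_4(K\sharp m(L))=0$. Moreover, the slice genus is a concordance invariant\textemdash this follows immediately from the triangle inequality~\eqref{eq:tri} applied to a pair of concordant knots\textemdash so $g_4(K)=g_4(L)$.

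Next, I let $m$ denote the braid index of $K$ (as in the statement of Theorem~\ref{thm:unknottingtotorusknotandbraidindex}) and apply the theorem: since $L$ has the required form and $K$ is quasi-positive, the inequality
\[
k(n-m)+g_4(K)-g_4(L)\leq d(K,L)
\]
holds. Substituting $g_4(K)-g_4(L)=0$ and $d(K,L)=0$ into this bound reduces it to $k(n-m)\leq 0$. Since by hypothesis $k$ is a positive integer, this forces $n-m\leq 0$, i.e.\ $m\geq n$, which is the claim.

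There is essentially no obstacle here, as the corollary is purely a numerical consequence of Theorem~\ref{thm:unknottingtotorusknotandbraidindex}; the only small point to verify is the concordance invariance of $g_4$, but this is already implicit in the discussion surrounding~\eqref{eq:tri}. Accordingly, the proof should be presented as a one-line deduction followed by a \qed.
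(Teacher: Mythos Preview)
Your proposal is correct and matches the paper's own reasoning exactly: the corollary is marked with a \qed\ because it follows immediately from Theorem~\ref{thm:unknottingtotorusknotandbraidindex} by setting $d(K,L)=g_4(K)-g_4(L)=0$ in~\eqref{eq:imprtri}, giving $k(n-m)\leq 0$ and hence $m\geq n$ since $k>0$.
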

We also show 
that sufficiently twisted quasi-positive $n$-braid closures cannot be concordant to \textit{any} knot of smaller braid index:
\begin{theorem}\label{thm:concbraidindex}
Let $L$ be the closure of an $n$-braid $\beta,$ where $\beta = \Delta^{2k}a_1\cdots a_{n-1} \alpha$ for some quasi-positive braid $\alpha$. If $k\geq n-1$, then any knot concordant to $L$ has braid index at least $n$. 
\end{theorem}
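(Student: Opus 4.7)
The approach is to use concordance invariance of the Upsilon function: if $K$ is concordant to $L$, then $\Upsilon_K\equiv\Upsilon_L$ (as functions of $t\in[0,2]$) and $g_4(K)=g_4(L)$. Assuming for contradiction that $K$ is concordant to $L$ and has braid index $m<n$, the plan is to evaluate both invariants at a strategic parameter $t_0=2/n$ and derive an incompatibility. Note also that, since $(a_1\cdots a_{n-1})^{nk+1}\alpha=\Delta^{2k}a_1\cdots a_{n-1}\alpha$, the braid $\beta$ is of the first form appearing in Theorem~\ref{thm:unknottingtotorusknotandbraidindex}, with the extra strengthening $k\geq n-1$; this is the regime in which our argument gains room to maneuver.

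For the $L$ side, $\beta$ is a product of the positive band generators $\Delta^{2k}$ and $a_1\cdots a_{n-1}$ with the quasi-positive braid $\alpha$, so $L$ is quasi-positive and Rudolph's formula supplies $g_4(L)$. Furthermore, $\beta$ contains $\Delta^{2k}(a_1\cdots a_{n-1})$, whose closure is the torus knot $T(n,nk+1)$. Using the inductive formulas for $\Upsilon$ of torus knots advertised in the abstract, one can pin down $\Upsilon_{T(n,nk+1)}(2/n)$ and then use the controlled additive effect of the quasi-positive factor $\alpha$ to compute (or upper-bound in absolute value) $\Upsilon_L(2/n)$. The hypothesis $k\geq n-1$ ensures this value is sufficiently negative.

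For the $K$ side, the key new input is a \emph{lower} bound on $\Upsilon_J(2/n)$ valid for \emph{any} knot $J$ with braid index $m$ and prescribed $g_4(J)$. Such a bound should follow from the fact that a minimal-index braid representative of $J$ yields a Seifert surface whose Alexander-graded knot Floer complex has grading range governed by $m$, constraining how negative $\Upsilon_J$ can become on intervals of the form $[0,2/m]$. The assumption $k\geq n-1$ is arranged so that, when $m<n$, this lower bound strictly exceeds $\Upsilon_L(2/n)$ from the previous step, contradicting the equality $\Upsilon_K(2/n)=\Upsilon_L(2/n)$ forced by concordance.

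The main obstacle is establishing the braid-index-based lower bound on $\Upsilon$ for arbitrary (not necessarily quasi-positive) knots. For quasi-positive $K$ the analogous estimate underlies Theorem~\ref{thm:unknottingtotorusknotandbraidindex} and ultimately rests on Rudolph's formula for the slice genus of quasi-positive knots; extending it to arbitrary $K$ is the essential new ingredient that promotes Corollary~\ref{cor:concordancebraidindexdetection} from the quasi-positive setting to the present statement. Quantifying the bound sharply enough to absorb both the slope of $\Upsilon_L$ produced by the large twist $\Delta^{2k}$ and the equality $g_4(K)=g_4(L)$ is the technical heart of the argument.
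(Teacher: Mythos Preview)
Your overall shape is right---compare a bound on $\Upsilon_L$ coming from the full-twist structure with a bound on $\Upsilon_K$ coming from its small braid index, and use concordance invariance of $\Upsilon$ and $\tau$---but the execution has two genuine gaps.

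First, the evaluation point $t_0=\tfrac{2}{n}$ cannot produce a contradiction. By Proposition~\ref{prop:qpbraidindexdetection} one has $\Upsilon_L(\tfrac{2}{n})=-\tfrac{2}{n}g_4(L)$, which is already the extremal value permitted by $|\Upsilon(t)/t|\leq g_4$; no lower bound on $\Upsilon_K(\tfrac{2}{n})$ depending only on $g_4(K)=g_4(L)$ and the braid index of $K$ can strictly exceed this, since for example $T_{2,2g_4(L)+1}$ has braid index $2$ and $\Upsilon(\tfrac{2}{n})=-\tfrac{2}{n}g_4(L)$ as well. The paper evaluates instead at $t=\tfrac{2}{n-1}>\tfrac{2}{n}$, where the \emph{second} clause of Proposition~\ref{prop:qpbraidindexdetection} applies and gives the nontrivial inequality $-\Upsilon_L(\tfrac{2}{n-1})\leq \tfrac{2}{n-1}\tau(L)-\tfrac{2k}{n-1}$. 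So the role of the twist hypothesis is the opposite of what you describe: it makes $-\Upsilon_L$ \emph{small} (not $\Upsilon_L$ very negative) once $t$ exceeds $\tfrac{2}{n}$.

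Second, your proposed source for the $K$-side bound (``Alexander-graded knot Floer complex has grading range governed by $m$'') is not a workable argument, and you rightly flag this as the main obstacle. The paper resolves it by a concrete and different mechanism: the slice-Bennequin inequality~\eqref{eq:Bennequin-type} for $\Upsilon$, applied to an $(n-1)$-braid $\gamma$ with $\widehat\gamma=K$, gives $-\Upsilon_K(t)/t\geq \tfrac{1}{2}(\ell(\gamma)-n+2)$ for $t\leq\tfrac{2}{n-1}$; then Livingston's slice-Bennequin inequality for $\tau$, applied to the \emph{mirror} $m\gamma$, yields $\tau(K)\leq\tfrac{1}{2}(\ell(\gamma)+n-2)$ and hence eliminates the writhe, giving $-\Upsilon_K(\tfrac{2}{n-1})\geq\tfrac{2}{n-1}\tau(K)-\tfrac{2(n-2)}{n-1}$. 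Comparing the two inequalities (using $\Upsilon_K=\Upsilon_L$ and $\tau(K)=\tau(L)$) forces $k\leq n-2$, contradicting $k\geq n-1$. The mirror trick for $\tau$ is the key idea you are missing.
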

This says in particular that
, for coprime positive integers $i$ and $p$, the $(p,p(p-1)+i)$-torus knot $T_{p,p(p-1)+i}$ is not concordant to any $(p-1)$-braid closure.

Corollary~\ref{cor:concordancebraidindexdetection} generalizes and reproves (by setting $K$ to be $L$) the following result on the braid index of positive braids by Franks and Williams.\footnote{Strictly speaking this is not a generalization since our setting is restricted to knots, while the original result holds for all links.}
\begin{corollary}\cite[(2.4)~Corollary]{Franks_Williams_87_BraidsAndTheJonesPolynomial}\label{cor:FranksWilliams}
If a knot $L$ is the closure of an $n$-braid $\beta=\Delta^2\alpha$, where $\alpha$ is a positive braid, then the braid index of $L$ is $n$.\qed
\end{corollary}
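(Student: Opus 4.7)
The plan is to deduce this directly from Corollary~\ref{cor:concordancebraidindexdetection} by taking $K=L$. First I would observe that a positive braid is in particular a quasi-positive braid (each standard generator $a_i$ is trivially a conjugate of itself), so both $\beta=\Delta^2\alpha$ and the braid $\alpha$ appearing inside it are quasi-positive. In particular, $L=\widehat{\beta}$ is a quasi-positive knot, and $L$ has the shape required by the second bullet of Theorem~\ref{thm:unknottingtotorusknotandbraidindex} with $k=1$.

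Next, since every knot is concordant to itself, I would apply Corollary~\ref{cor:concordancebraidindexdetection} with the choice $K=L$. That corollary then asserts that the braid index of $K=L$ is at least $n$. This supplies one of the two inequalities needed.

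The reverse inequality is immediate from the hypothesis: because $L$ is presented as the closure of the $n$-braid $\beta$, the braid index of $L$ is at most $n$. Combining the two inequalities yields that the braid index of $L$ equals $n$, proving Corollary~\ref{cor:FranksWilliams}.

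There is essentially no obstacle here, as the statement is just the self-concordant specialization of Corollary~\ref{cor:concordancebraidindexdetection}; all the real work is absorbed into Theorem~\ref{thm:unknottingtotorusknotandbraidindex}, whose proof via the $\Upsilon$-invariant is carried out separately in the body of the paper. The only thing worth emphasizing in writing is the (trivial) verification that positive braids qualify as quasi-positive, so that the hypotheses of Corollary~\ref{cor:concordancebraidindexdetection} are genuinely satisfied by $L$.
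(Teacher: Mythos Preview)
Your proposal is correct and matches the paper's own argument: the paper explicitly states that Corollary~\ref{cor:FranksWilliams} is recovered from Corollary~\ref{cor:concordancebraidindexdetection} by setting $K=L$, with the upper bound on the braid index coming for free from the given $n$-braid presentation. Your added remark that positive braids are quasi-positive (so $L$ itself is an admissible $K$) is exactly the one verification needed.
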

The original proof of Corollary~\ref{cor:FranksWilliams} makes use of the Morton-Franks-Williams inequality~\cite{Morton_86,Franks_Williams_87_BraidsAndTheJonesPolynomial}, which relates the breadth of the HOMFLY polynomial with the braid index.
The present proof is based on the concordance invariant $\Upsilon$ and allows to extend Corollary~\ref{cor:FranksWilliams} (at least partially) to results about the concordance class such as Corollary~\ref{cor:concordancebraidindexdetection} and Theorem~\ref{thm:concbraidindex}, which, to the authors' knowledge, are the first results of this type. It is natural to ask whether the quasi-positivity assumption on $K$ in Corollary~\ref{cor:concordancebraidindexdetection} can be dropped. 
In Section~\ref{sec:questions}, we ask whether a `concordance generalized Jones conjecture' holds; compare Question~\ref{qu:concordanceGenJonesConj}. If yes, this would imply that Corollary~\ref{cor:concordancebraidindexdetection} holds true without any assumption on $K$, and, in particular, {that each torus knot realizes the minimal braid index among all knots in its concordance class}.

In a different direction, we ask whether $\Upsilon$ provides a lower bound for the braid index as follows. If $b$ denotes the braid index of $K$, is $\Upsilon_K$ linear on $[0,\frac{2}{b}]$; compare Question~\ref{qu:bindex}. A positive answer to this question would also imply that the quasi-positivity assumption on $K$ in Corollary~\ref{cor:concordancebraidindexdetection} can be dropped.
In fact, a positive answer would imply that in the concordance group, the torus knot $T_{p,p+1}$ is linearly independent from the subgroup generated by all knots which are closures of braids of braid index less than $p$. This would allow one to filter the concordance group by `concordance braid index', and perhaps better understand its structure.

We prove Theorem~\ref{thm:unknottingtotorusknotandbraidindex} by combining three ingredients. Firstly, the calculation of $\Upsilon$ for the torus knots of the form $T_{n,kn+1}$. In fact, we provide an inductive formula for $\Upsilon$ for all torus knots, which might be of independent interest; see Section~\ref{sec:Upsfortorusknots}. Secondly, we observe that $\Upsilon$ can be used to prove the slice-Bennequin inequality, \`a la Rudolph \cite{rudolph_QPasObstruction}. The connection to the braid index arises as follows: for positive integers $n$, the quantity $\left|\frac{\Ut}{t}\right|$ for $t\leq\frac{2}{n}$ can be used to prove the slice-Bennequin inequality, while for $t\in (\frac{2}{n},2-\frac{2}{n})$ this is not the case. As a third ingredient we use the generalized Jones conjecture as proven 
by Dynnikov and Prasolov~\cite{DynnikovPrasolov_13} and, independently, by LaFountain and Menasco~\cite{LaFountainMenasco_14}.
The ingredients are combined as follows. We introduce notions that measure how many full-twists a knot `contains', which turns out to yield a good framework to prove Theorem~\ref{thm:triimprovmentforknotscontainingtwists}---a generalization of Theorem~\ref{thm:unknottingtotorusknotandbraidindex}. For this the calculation of $\Upsilon$ for $T_{n,kn+1}$ is used and parts of the proof (i.e.~the proof of Proposition~\ref{prop:qpbraidindexdetection}) mimic a proof of the slice-Bennequin inequality.
We invoke the generalized Jones conjecture to show that quasi-positive knots fit well into this setting and that Theorem~\ref{thm:triimprovmentforknotscontainingtwists} implies Theorem~\ref{thm:unknottingtotorusknotandbraidindex}. All of this, as well as the proof of Theorem~\ref{thm:concbraidindex}, is done in Section~\ref{sec:upstorusknotsandmainresult}.

In Section~\ref{sec:homogenization}, we study $\Upsilon$ from a coarser point of view and compare it with the Levine-Tristram signature profile. Before we make this precise, we mention the following examples that came out of this study.
For a non-negative integer $n$, let $K_{n}$ be the closure of the positive $3$-braid $(a_1^2a_2^2)^n(a_1a_2)$.
For $n\geq 6$, $\Upsilon_{K_{n}}$ is not convex (compare Example~\ref{Ex:a1a1a2a2}), which shows that $K_{n}$ is not an $L$-space knot (since $L$-space knots have convex $\Upsilon$~\cite{BorodzikHedden}), while, for $n\equiv 1 \mod 3$, these $K_n$ pass all known classical criteria for $L$-space knots: they are fibred and strongly quasi-positive (since they are closures of positive 3-braids), and their Alexander polynomials satisfy the criteria established in~\cite{OzsvathSzabo_03_AbsolutlyGradedFloerHomologies,HeddenWatson,Krcatovich}. Also, these examples provided a negative answer to the question whether $\Upsilon$ of positive braid closures is always convex by Borodzik and Hedden~\cite{BorodzikHedden}.

To make our coarse point of view precise, we use homogenized invariants.
We fix a positive integer $n$. For a real valued link invariant $\tau$ and any $n$-braid $\beta$, we set
\[\widetilde{\tau}(\beta)=\lim_{l\to\infty}\frac{\tau(\widehat{\beta^l})}{l}.\]
A slight variation also yields a notion of homogenization when $\tau$ is a knot invariant, and it turns out that $\widetilde{\tau}$ is well-defined for both $\U(t)$ and $\so$, where the latter denotes the Levine-Tristram signature corresponding to a unit complex number $\omega$.
We calculate that for all $n$-braids $\beta$ and $t\leq\frac{2}{n}$ the homogenization $2\widetilde{\U(t)}(\beta)$ equals the homogenization $\widetilde{\s_{e^{t\pi i}}(\beta)}$ and that
for all $3$-braids $\beta$ the homogenization of the signature $\widetilde{\s}(\beta)$ equals $2\widetilde{\Upsilon(1)}(\beta)$. The latter yields
$|\Upsilon_K(1)-\frac{\s(K)}{2}|\leq 2$ for all knots $K$ which are closures of $3$-braids. This is of interest because $|\Upsilon_K(1)-\frac{\s(K)}{2}|$ is a lower bound for the smooth four-dimensional
crosscap number~\cite{OSS_2015}.
On the other hand, we provide a family of $3$-braids on which $|\HUt-\frac{\widetilde{\s_{e^{t\pi i}}}}{2}|$ is arbitrarily large, for $t=\frac{3}{4}$.

{\bf{Acknowledgments}:} We thank Sebastian Baader, 
Matt Hedden, Lukas Lewark, and Aru Ray for helpful discussions. Thanks also to Peter Ozsv\'ath for pointing us to Dan Dore's~\cite{Dore15}.
We owe special thanks to Maciej Borodzik, who referred us to~\cite[Proposition 5.2.4]{BodnarNemethi},
which we need to compute $\Upsilon$ of torus knots in full generality.
Finally, many thanks to the anonymous referee for their detailed and on point suggestions.

\section{$\Upsilon$ for torus knots}\label{sec:Upsfortorusknots}

In~\cite{OSS_2014}, the smooth concordance invariant $\Upsilon_K$ is defined from a `$t$-modified knot Floer homology'. The invariant takes the form of a continuous piecewise linear function $[0,2]\to \R$ whose derivative has finitely many discontinuities. It is additive under connected sum, and for each value of $t>0$, $\Upsilon$ bounds the slice genus:
\begin{equation}\label{eqn:upsgenusbound}
\Big| \frac{\Upsilon_K(t)}{t} \Big| \leq g_4(K).
\end{equation}

In~\cite[Theorem 1.15]{OSS_2014}, it is shown how for a torus knot (or any $L$-space knot, more generally), $\Upsilon$ can be obtained from the Alexander polynomial. This is carried out explicitly for the case of the $(n,n+1)$-torus knots.
\begin{prop}[{\cite[Proposition 6.3]{OSS_2014}}\footnote{While the application of \cite[Theorem 1.15]{OSS_2014} yields the result stated here, the preprint available as of this writing contains a small typo in Proposition 6.3 (the index $i$ is shifted by $1$), which accounts for the discrepancy between the result quoted here and that written in~\cite{OSS_2014}.}]\label{prop:OSStorus}
Consider the torus knot $T_{n,n+1}$.
For any $t\in \left[ \frac{2i}{n},\frac{2i+2}{n}\right]$,
\[ \;\quad\quad\quad\quad\quad\quad\quad\quad\Upsilon_{T_{n,n+1}} (t)= -i(i+1) -\frac{1}{2}n(n-1-2i)t.\quad\quad\quad\quad\quad\quad\quad\qed\]

\end{prop}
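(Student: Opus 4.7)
The plan is to invoke Theorem~1.15 of \cite{OSS_2014}, which expresses $\Upsilon_K$ for any $L$-space knot as a piecewise linear function whose breakpoints and slopes are determined explicitly by the exponents of the (symmetrized) Alexander polynomial. Since $T_{n,n+1}$ is a positive torus knot it is an $L$-space knot, so the theorem applies and the problem reduces to a finite combinatorial calculation.

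First I would write $\Delta_{T_{n,n+1}}(T)$ in its symmetrized form $\sum_{j=0}^{2g}(-1)^j T^{a_j}$ with $g=\binom{n}{2}$ and $a_0 > a_1 > \cdots > a_{2g} = -a_0$. The exponents $a_j$ are classical: they are read off from the complement of the numerical semigroup $\langle n, n+1\rangle$, whose gaps partition naturally into $n-1$ arithmetic blocks of lengths $1,2,\ldots,n-1$. Equivalently, one obtains the ``staircase'' associated to the Young diagram $(n-1, n-2,\ldots, 1)$, which produces explicit coordinates for the staircase vertices entering the $\Upsilon$ formula.

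Next, for each $t \in \left[\frac{2i}{n}, \frac{2i+2}{n}\right]$, I would identify which staircase vertex realizes the defining extremum in Theorem~1.15 of \cite{OSS_2014}. The symmetric shape of the staircase of $T_{n,n+1}$ reduces this to comparing the slopes of the segments joining successive vertices; the $i$th vertex dominates on precisely the interval $\left[\frac{2i}{n}, \frac{2i+2}{n}\right]$. Substituting its coordinates into the defining expression yields a linear function of $t$, which a direct calculation matches with the asserted formula $-i(i+1) - \frac{1}{2}n(n-1-2i)t$.

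The main obstacle is purely combinatorial bookkeeping: correctly indexing the staircase coordinates and confirming that the transitions between dominant vertices happen at exactly $t = \frac{2i}{n}$. As sanity checks, one verifies continuity at each breakpoint (both adjacent formulas evaluate to $i(i-n)$ at $t=\frac{2i}{n}$), the symmetry $\Upsilon_K(t)=\Upsilon_K(2-t)$, and the anchor values $\Upsilon_K(0)=\Upsilon_K(2)=0$. The off-by-one typo flagged in the footnote is exactly the kind of indexing slip that surfaces naturally during this verification, and tracking it carefully here is what distinguishes the statement above from the version printed in the preprint.
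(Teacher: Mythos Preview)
Your proposal is correct and matches the paper's treatment: the paper does not supply its own proof but simply quotes the result from \cite[Proposition~6.3]{OSS_2014}, noting in the footnote that it is obtained by applying \cite[Theorem~1.15]{OSS_2014}---precisely the route you outline. Your sketch of the staircase/semigroup bookkeeping is the standard way to carry out that application, so there is nothing to add.
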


In this section we give an explicit formula for $\Upsilon$ of all positive torus knots, showing it is always a sum of the functions $\Upsilon_{T_{n,n+1}}$, for varying $n$. Indeed, the same can be seen to be true for any algebraic knot. Though we will only need Corollary~\ref{cor:upsfortnnk+1} to obstruct the existence of optimal cobordisms, we prove the following more general statement, which was independently conjectured by Dore~\cite[Conjecture~1]{Dore15}.

\begin{prop}\label{prop:upsalltorus}
Let $a<b$ be two coprime positive integers. Let $q_i$ and $r_i$ be the $i$th quotient and remainder occurring in the Euclidean algorithm for $b$ and $a$ (so that $r_0=a$ and $r_{i-1}=q_{i}r_{i}+r_{i+1}$). Then \[ \Upsilon_{T_{a,b}}(t) = \sum_{i\geq 0} q_i \cdot \Upsilon_{T_{r_i,r_i+1}}(t).\]
\end{prop}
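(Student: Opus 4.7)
The plan is to prove the formula by strong induction on $a+b$, reducing to a single-step recursion that mirrors one step of the Euclidean algorithm. Specifically, for coprime integers $b>a\geq 2$ with $b=q_0a+r_1$ and $0<r_1<a$, I aim to establish
\[\Upsilon_{T_{a,b}}(t) \;=\; q_0\cdot \Upsilon_{T_{a,a+1}}(t) \;+\; \Upsilon_{T_{r_1,a}}(t),\]
with the convention $\Upsilon_{T_{a,1}}\equiv 0$ since $T_{a,1}$ is the unknot. Granting this reduction, the proposition follows: the Euclidean algorithm for $(r_1,a)$ begins with $a=q_1 r_1 + r_2$, so applying the inductive hypothesis to $T_{r_1,a}=T_{a,r_1}$ yields $\Upsilon_{T_{r_1,a}}=\sum_{i\geq 1} q_i \Upsilon_{T_{r_i,r_i+1}}$. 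The base cases $a=1$ (unknot) and $b=a+1$ (where $q_0=1$ and all remaining terms vanish) are immediate.

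The reduction itself is obtained by iterating $q_0$ times the single-step identity
\[\Upsilon_{T_{a,b}}(t) \;=\; \Upsilon_{T_{a,a+1}}(t) \;+\; \Upsilon_{T_{a,b-a}}(t)\]
for coprime $b>a+1$, where the torus-knot symmetry $T_{a,c}=T_{c,a}$ is used once at the end (when $b-q_0 a = r_1<a$). To prove the single-step identity, I would use that torus knots are $L$-space knots, so $\Upsilon_{T_{a,b}}$ is determined by the Alexander polynomial, or equivalently by the numerical semigroup $\Gamma_{a,b}=a\Z_{\geq 0}+b\Z_{\geq 0}$ and its complementary gap set $G_{a,b}=\Z_{\geq 0}\setminus \Gamma_{a,b}$. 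Concretely, I would invoke \cite[Proposition~5.2.4]{BodnarNemethi}, which expresses $\Upsilon_{T_{a,b}}(t)$ as a minimization/maximization over lattice data attached to $G_{a,b}$.

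The key combinatorial content is then to decompose this extremization problem for $G_{a,b}$ into the corresponding problems for $G_{a,a+1}$ and $G_{a,b-a}$. A first sanity check is the numerical identity
\[|G_{a,b}|=\tfrac{(a-1)(b-1)}{2}=\tfrac{(a-1)a}{2}+\tfrac{(a-1)(b-a-1)}{2}=|G_{a,a+1}|+|G_{a,b-a}|,\]
which guarantees that the two sides of the single-step identity match in overall scale (equivalently, in the $g_4$-bound \eqref{eqn:upsgenusbound}). I would look for an explicit bijection between $G_{a,b}$ and the disjoint union $G_{a,a+1}\sqcup G_{a,b-a}$ that is compatible with the filtration underlying the Bodnar-Nemethi formula; a natural candidate is to split $G_{a,b}$ according to whether a gap is smaller or larger than $a(b-a)$, or by its residue modulo $a$.

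The hardest step is this last combinatorial verification: upgrading the size equality to a compatible decomposition of the full piecewise-linear $\Upsilon$-function, matching breakpoints and slopes. Once the bijection is in hand and shown to respect the lattice-minimization data of \cite[Proposition~5.2.4]{BodnarNemethi}, the single-step identity---and with it the full Euclidean-algorithm decomposition---drops out. Proposition~\ref{prop:OSStorus} serves as a check on the initial piece $\Upsilon_{T_{a,a+1}}$ and can also be used to verify the identity directly on a few small cases (e.g.\ $T_{3,5}=\Upsilon_{T_{3,4}}+\Upsilon_{T_{2,3}}$) before attempting the general argument.
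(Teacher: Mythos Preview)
Your inductive skeleton matches the paper's exactly: reduce the Euclidean formula to the single-step identity $\Upsilon_{T_{a,b}}=\Upsilon_{T_{a,a+1}}+\Upsilon_{T_{a,b-a}}$ for $b>a\geq 2$, iterate $q_0$ times, swap $T_{a,r_1}=T_{r_1,a}$, and induct. The gap is in how you propose to establish the single-step identity. You mischaracterize \cite[Proposition~5.2.4]{BodnarNemethi}: it does not express $\Upsilon_{T_{a,b}}$ as a minimization over gap data. What it actually says (this is Proposition~\ref{prop:blowup} here) is that if $K_2$ is the link of a plane-curve singularity of multiplicity $m$ and $K_1$ the link of that singularity blown up once, then the semigroup counting functions satisfy the min-convolution $H_{K_2}(i)=\min_{j\in\Z}\{H_{K_1}(i-j)+H_{T_{m,m+1}}(j)\}$. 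Since $T_{a,b}$ is the link of $x^a-y^b=0$ (multiplicity $a$) and one blowup gives $x^a-y^{b-a}=0$ with link $T_{a,b-a}$, the cited result hands you $H_{T_{a,b}}(i)=\min_{j}\{H_{T_{a,b-a}}(i-j)+H_{T_{a,a+1}}(j)\}$ for free.

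The paper then observes (Lemma~\ref{lem:upssum}) that min-convolution of counting functions formally implies additivity of $\Upsilon$, via the Borodzik--Livingston formula $\Upsilon_K(t)=-2\min_{i\in\Z}\{H_K(i)+\tfrac{t}{2}(g-i)\}$ of Proposition~\ref{prop:semigroups}: one decouples the double minimization by a change of index, a two-line Legendre-transform-type computation. That is the entire content of the single-step identity. Your proposed route---an explicit bijection $G_{a,b}\cong G_{a,a+1}\sqcup G_{a,b-a}$ compatible with the filtration---is strictly harder and not quite the right object: a bijection of gap sets does not by itself produce the min-convolution of counting functions that the $\Upsilon$-formula actually requires, so pursuing it would amount to reproving the blowup formula combinatorially. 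The missing idea is simply to read \cite[Proposition~5.2.4]{BodnarNemethi} correctly as the blowup min-convolution and pair it with Proposition~\ref{prop:semigroups}.
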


In other words, $\Upsilon_{T_{a,b}}(t)$ can be calculated inductively by using
\[\Upsilon_{T_{a,b}}(t)=\Upsilon_{T_{a,b-a}}(t)+\Upsilon_{T_{a,a+1}}(t)\]
and Proposition~\ref{prop:OSStorus}. This looks very similar to the inductive scheme for the calculation of the signature provided by Gordon, Litherland, and Murasugi~\cite{GLM}. In fact, it turns out that the signature (and its generalization the Levine-Tristram signatures) and $\Upsilon$ are surprisingly close for torus knots; see Corollary~\ref{cor:u=sfortoruslinks}.

\begin{corollary}\label{cor:upsfortnnk+1} For positive integers $n$ and $k$, we have
\begin{align*}
\quad\;\U_{T_{n,nk+1}}(t)=& k\cdot \U_{T_{n,n+1}}(t) &\\
=& k\left(-(i+1)i-\frac{1}{2}n(n-1-2i)t\right)\for t\in\left[\frac{2i}{n},\frac{2i+2}{n}\right].\quad&\hfill\qed
\end{align*}
\end{corollary}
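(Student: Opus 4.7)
The plan is to apply Proposition~\ref{prop:upsalltorus} directly to the pair $(a,b)=(n,nk+1)$ and to observe that the Euclidean algorithm for this pair terminates in just two steps, of which only one contributes a nontrivial torus knot summand.

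First I would run the Euclidean algorithm: $nk+1 = k\cdot n + 1$ gives $q_0 = k$ and $r_1 = 1$, and then $n = n\cdot 1 + 0$ gives $q_1 = n$ and $r_2 = 0$, so the algorithm terminates. In the indexing of Proposition~\ref{prop:upsalltorus} (where $r_{-1}=b$, $r_0 = a$, and $r_{i-1} = q_i r_i + r_{i+1}$) this yields $r_0 = n$, $r_1 = 1$, $q_0 = k$, $q_1 = n$.

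Substituting into the formula of Proposition~\ref{prop:upsalltorus} gives
\[\Upsilon_{T_{n,nk+1}}(t) \;=\; k \cdot \Upsilon_{T_{n,n+1}}(t) \;+\; n \cdot \Upsilon_{T_{1,2}}(t).\]
The second summand vanishes because $T_{1,2}$ is the unknot and $\Upsilon$ is identically zero on the unknot. This establishes the first equality $\Upsilon_{T_{n,nk+1}}(t) = k\cdot \Upsilon_{T_{n,n+1}}(t)$, and the explicit piecewise linear description is then immediate by plugging in Proposition~\ref{prop:OSStorus}.

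There is no real obstacle here: granted Proposition~\ref{prop:upsalltorus}, the corollary is essentially a single-line calculation. The only point to verify is that $r_0 = n$ is the only remainder $\geq 2$ produced by the Euclidean algorithm applied to $(nk+1,n)$, so that among the summands $q_i\cdot \Upsilon_{T_{r_i, r_i+1}}$ only the $i=0$ term survives.
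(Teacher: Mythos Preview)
Your proof is correct and follows exactly the approach intended by the paper: the corollary is stated with a \qed\ because it is immediate from Proposition~\ref{prop:upsalltorus}, and you have spelled out precisely that immediate deduction by running the Euclidean algorithm on $(n,nk+1)$ and observing that only the $i=0$ term contributes nontrivially. There is nothing to add.
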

Corollary~\ref{cor:upsfortnnk+1} can also be obtained by directly working with the combinatorial description given in~\cite[Theorem 1.15]{OSS_2014} as done by Dore~\cite[Theorem~4]{Dore15}.
In particular, Corollary~\ref{cor:upsfortnnk+1} yields
\begin{Obs}\label{Obs:UtforTpnp1} For a torus knot $T=T_{n,nk+1}$ with $n$ and $k$ positive integers,  one has
\[\U_T(t)=-t\tau(T)=-tg_4(T) \for t\leq\frac{2}{n} \et \]
\[\U_T(t)\geq-tg_4(T)+k(n t-2)>-tg_4(T) \for \frac{2}{n}<t\leq 1.\]

\end{Obs} Here, $\tau$ denotes the Ozsv\'ath--Szab\'o concordance invariant, whose value for positive torus knots equals the slice genus \cite{OzsvathSzabo_03_KFHandthefourballgenus}.

The rest of this section is concerned with the proof of Proposition~\ref{prop:upsalltorus}. Only Corollary~\ref{cor:upsfortnnk+1} and Observation~\ref{Obs:UtforTpnp1} are used in the rest of the text.

The proof of Proposition~\ref{prop:upsalltorus} relies on two key ideas: that $\Upsilon$ for an algebraic knot can be computed from a semigroup counting function, and that this counting function behaves well under blowups of singularities.

An algebraic knot $K$ can be realized as the link of a singularity of an algebraic curve in $\mathbb{C}^2$. Associated to the singularity is a semigroup of non-negative integers (see \cite{Wall} for a detailed exposition) which we will denote $S_K$. Define the counting function of $S_K$ as \[ H_K(i) = \# \{ s \in S_K | s<i \}.\]

Computing $\Upsilon$ from the Alexander polynomial can be rephrased as computing it from the semigroup counting function.

\begin{prop}[{ \cite[Proposition 3.4]{BorodzikLivingston_13Ar}, cf.  \cite[Theorem 1.15]{OSS_2014}}]\label{prop:semigroups}
Let $K$ be an algebraic knot whose corresponding semigroup has counting function $H_K$, and let $g$ be the genus of $K$. For any $t\in [0,1]$, \[\;\;\Upsilon_K(t) = -2 \min_{i\in \{0,1,\ldots,2g \} } \left\{  H_K(i) + \frac{t}{2}(g-i)\right\}= -2 \min_{i\in \Z } \left\{  H_K(i) + \frac{t}{2}(g-i)\right\}. \qed\]
\end{prop}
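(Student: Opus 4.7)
The plan is to specialize the general formula for $\Upsilon$ of $L$-space knots from~\cite[Theorem~1.15]{OSS_2014} to algebraic knots, using the dictionary between the Alexander polynomial of an algebraic knot and the semigroup $S_K$ of its singularity. First, I would recall that algebraic knots are $L$-space knots, so the OSS formula applies: for such a knot of genus $g$, the symmetrized Alexander polynomial has an alternating expansion $\Delta_K(T)=\sum_{j=0}^{2m}(-1)^j T^{\alpha_j}$ with $-g=\alpha_0<\alpha_1<\cdots<\alpha_{2m}=g$, and $\Upsilon_K(t)$ is computed by~\cite[Theorem~1.15]{OSS_2014} as $-2$ times the minimum, over a sequence of lattice points read off from the exponents $\alpha_j$, of the linear functional sending $(x,y)$ to $x+\tfrac{t}{2}y$.

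Next, I would invoke the classical identity relating $\Delta_K$ and $S_K$: one has $\Delta_K(T)/(1-T)=\sum_{s\in S_K}T^s$ (after symmetrization), so that for each integer $i$ the number of exponents $\alpha_j$ lying in $[-g,i-g)$ on the ``stairs'' of $\Delta_K$ equals the gap count $i-H_K(i)$, while the semigroup element count is $H_K(i)$. Substituting this identification into the minimum from~\cite[Theorem~1.15]{OSS_2014} replaces its lattice-point data by the pair $(H_K(i),g-i)$ as $i$ ranges over $\{0,1,\ldots,2g\}$, yielding the first asserted equality
\[\Upsilon_K(t)=-2\min_{i\in\{0,1,\ldots,2g\}}\Big\{H_K(i)+\tfrac{t}{2}(g-i)\Big\}.\]

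Finally, I would verify that extending the minimum to all $i\in\Z$ gives the same value. For $i\leq 0$, $H_K(i)=0$, so $H_K(i)+\tfrac{t}{2}(g-i)\geq \tfrac{t}{2}g$, which equals the value at $i=0$. Since $S_K$ has Frobenius number $2g-1$ (the singularity is Gorenstein), $H_K(i)=i-g$ for all $i\geq 2g$, so $H_K(i)+\tfrac{t}{2}(g-i)=(1-\tfrac{t}{2})(i-g)$, which for $t\in[0,2]$ is non-decreasing in $i$ and hence minimized at $i=2g$. Thus neither tail contributes, and the two minima coincide. The main technical obstacle is the bookkeeping in the second step: one must carefully match the staircase of lattice points used in~\cite[Theorem~1.15]{OSS_2014} with the indicator sequence of $S_K$, and this is precisely the content of~\cite[Proposition~3.4]{BorodzikLivingston_13Ar}, which I would cite rather than redo in detail.
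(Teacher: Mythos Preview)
Your outline is correct, and in fact the paper does not prove this proposition at all: it is stated as a citation of \cite[Proposition~3.4]{BorodzikLivingston_13Ar} (together with \cite[Theorem~1.15]{OSS_2014}) and closed with a \qed, so there is no ``paper's own proof'' to compare against. Your sketch---apply the OSS formula for $L$-space knots, translate the Alexander-polynomial staircase into the semigroup counting function via the standard identity, and then observe that the tails $i\leq 0$ and $i\geq 2g$ do not affect the minimum---is exactly how one recovers the Borodzik--Livingston formulation from the OSS one, and your final paragraph checking the two tails is clean and correct.
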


Note that $\Upsilon_K(2-t)=\Upsilon_K(t)$ for all $t\in [0,2]$ \cite[Proposition 1.2]{OSS_2014}, so $\Upsilon_K$ is determined by its values on $[0,1]$. Next we note the effect that blowing up a singularity has on the corresponding semigroup. We thank Maciej Borodzik for pointing us to this result.

\begin{prop}[{\cite[Proposition 5.2.4]{BodnarNemethi}}]\label{prop:blowup}
Suppose $K_2$ is the link of a plane curve singularity with multiplicity $m$, and $K_1$ is the link of the singularity blownup once. Then \[\quad\quad\quad\quad\quad\quad\quad\quad\quad H_{K_2}(i) = \min_{j\in \Z} \left\{ H_{K_1}(i-j) + H_{T_{m,m+1}} (j)\right\}.\quad\quad\quad\quad\quad\quad\qed\]
\end{prop}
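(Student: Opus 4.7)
The plan is to derive this blow-up formula by combining the Puiseux parametrization of the singularity with a direct analysis of how valuations on the ambient local ring transform under blow-up. After a generic linear change of coordinates on $\C^2$, the branch underlying $K_2$ admits a Puiseux parametrization $(x(u), y(u)) = (u^m, y(u))$ with $y(u) = \sum_{k \geq m} c_k u^k$ and $c_m \neq 0$. The semigroup $S_{K_2}$ is then the image of the $u$-adic valuation $v$ on $\C\{x,y\}\setminus\{0\}$ pulled back via this parametrization. In the blow-up chart $(x,w) = (x, y/x)$, the strict transform is parametrized by $(u^m, \tilde{y}(u))$ with $\tilde y(u) = c_m + c_{m+1}u + \ldots$, and after translating the origin to $(0, c_m)$, the semigroup $S_{K_1}$ arises as the image of the \emph{same} valuation $v$, now restricted to $\C\{x, w-c_m\}\setminus\{0\}$.

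The crucial observation is that $\mathcal{O}_{C_2}$ and $\mathcal{O}_{C_1}$ are both naturally subrings of $\C\{x, w\}$ sharing the same $u$-adic valuation, and they differ in a structured way: writing any $g \in \C\{x,y\}$ in terms of $(x, w=y/x)$ expresses it as a power series $h(x,w) = \sum_{i,j} c_{ij}\, x^{i+j}w^j$ subject to the constraint $h(0,w) \in \C$ (no pure $w$-terms). This decomposes $v(g)$ as $\min_{i\geq 0}\{mi + v(h_i(w))\}$ where the $h_i(w)$ for $i \geq 1$ range over general analytic functions of $w$ and produce precisely the valuations in $S_{K_1}$, while $h_0$ is forced to be constant. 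The semigroup $\langle m, m+1\rangle$ of $T_{m,m+1}$ emerges as the combinatorial record of this obstruction: its elements enumerate exactly the admissible weighted combinations of monomial contributions of the form $m i + j$ subject to the constraint $h_0 \in \C$, which is precisely what is encoded by $H_{T_{m,m+1}}$.

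Putting these ingredients together, the Apéry set of $S_{K_2}$ with respect to $m$ becomes expressible as a Minkowski-type combination of elements of $S_{K_1}$ with elements of the $\langle m, m+1\rangle$-semigroup. Translating this description from semigroups to their counting functions via a tropical/Legendre duality argument then yields the desired infimum-convolution formula $H_{K_2}(i) = \min_j\{H_{K_1}(i-j) + H_{T_{m,m+1}}(j)\}$. The main obstacle, in my view, is making the second step precise: verifying that the defect between the subring $\mathcal{O}_{C_2} \subset \C\{x,w\}$ and the full $\C\{x,w\}$ is governed by exactly the semigroup of $T_{m,m+1}$, with no further correction, is delicate and likely requires invoking the Gorenstein symmetry enjoyed by plane curve singularities to match the obstruction to $H_{T_{m,m+1}}$ on the nose rather than merely up to an inequality.
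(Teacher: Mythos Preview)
The paper does not prove this proposition at all: it is quoted verbatim from \cite[Proposition~5.2.4]{BodnarNemethi}, ended with a \qed, and used as a black box. So there is no argument in the paper to compare your sketch against.

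On its own merits, your outline points in a reasonable direction but does not constitute a proof. Two concrete issues:

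\emph{First}, the gap you yourself flag is the entire content of the proposition. Saying that the defect between $\mathcal{O}_{C_2}\subset\C\{x,w\}$ and $\C\{x,w\}$ ``is governed by exactly the semigroup of $T_{m,m+1}$'' and that this ``likely requires invoking Gorenstein symmetry'' is not an argument; it is a restatement of what has to be shown. The standard route (as in Bodn\'ar--N\'emethi) goes through the transformation law for the Ap\'ery set of $S_{K_2}$ with respect to $m$ under a single blow-up: if $0=\alpha_0<\alpha_1<\cdots<\alpha_{m-1}$ are the Ap\'ery elements of $S_{K_2}$, then $\alpha_i-im$ are those of $S_{K_1}$ (after reordering). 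This is a precise combinatorial statement that one proves directly from the Puiseux data, and the infimum-convolution formula for the $H$-functions is then a formal consequence. Your sketch never isolates or proves this Ap\'ery transformation, and the appeal to ``tropical/Legendre duality'' in the last step is too vague to carry any weight without it.

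\emph{Second}, a minor point of setup: asserting $c_m\neq 0$ after a generic linear change is not the usual normalization (one typically arranges the tangent cone to be $y=0$, forcing $c_m=0$ and $\mathrm{ord}\,y(u)>m$), and in any case the argument should not depend on which chart of the blow-up the strict transform lands in. This is fixable, but as written it signals that the coordinate bookkeeping has not been carried through.
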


Finally, we note that this relation of semigroups corresponds to additivity of the $\Upsilon$ invariant. The following is implicit in the work of Borodzik and Livingston \cite{BorodzikLivingston_13Ar}. They stated their results in terms of `$J$-functions' of connected sums of algebraic knots, but Proposition \ref{prop:semigroups} gives a corresponding statement in terms of $\Upsilon$.

\begin{lemma}[Borodzik--Livingston]\label{lem:upssum}
If $K_1,K_2$ and $K_3$ are algebraic knots whose corresponding semigroups have counting functions related as \begin{equation} \label{eq:semigroupadd}
H_{K_3} (i) = \min_{j\in \Z} \{ H_{K_1}(i-j) +H_{K_2}(j) \},
\end{equation}
then \[ \Upsilon_{K_3}(t) = \Upsilon_{K_1}(t) + \Upsilon_{K_2}(t).\]
\end{lemma}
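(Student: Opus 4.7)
The plan is to reduce everything to the formula of Proposition~\ref{prop:semigroups} and to recognize the hypothesized relation as an infimal convolution, under which the Legendre-type transform $H_K\mapsto \Upsilon_K$ turns into ordinary addition. For $i=1,2,3$, let $g_i$ denote the genus of $K_i$, so that
\[\Upsilon_{K_i}(t)=-2\min_{k\in\Z}\left\{H_{K_i}(k)+\tfrac{t}{2}(g_i-k)\right\}.\]

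First I would establish the numerical identity $g_3=g_1+g_2$. Since $g_K$ equals the number of gaps of the semigroup, one has $g_K=\lim_{i\to\infty}(i-H_K(i))$, and in fact $H_K(i)\ge\max(0,i-g_K)$ for every $i\in\Z$. Plugging these lower bounds into the right-hand side of \eqref{eq:semigroupadd} gives $H_{K_3}(i)\ge i-g_1-g_2$ for every $i$, while choosing a $j$ with both $j$ and $i-j$ past the conductors of $S_{K_1}$ and $S_{K_2}$ yields a matching upper bound $H_{K_3}(i)\le i-g_1-g_2$ for all sufficiently large $i$; hence $g_3=g_1+g_2$.

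Next I would substitute \eqref{eq:semigroupadd} into the formula for $\Upsilon_{K_3}$, use $g_3=g_1+g_2$ to split the linear term, merge the two minimizations, and change variables via $k=i-j$ so the double minimum decouples:
\begin{align*}
\Upsilon_{K_3}(t)
&=-2\min_{i\in\Z}\left\{\min_{j\in\Z}\bigl[H_{K_1}(i-j)+H_{K_2}(j)\bigr]+\tfrac{t}{2}(g_1+g_2-i)\right\}\\
&=-2\min_{k,j\in\Z}\left\{\Bigl[H_{K_1}(k)+\tfrac{t}{2}(g_1-k)\Bigr]+\Bigl[H_{K_2}(j)+\tfrac{t}{2}(g_2-j)\Bigr]\right\}\\
&=\Upsilon_{K_1}(t)+\Upsilon_{K_2}(t),
\end{align*}
using the elementary identity $\min_{k,j}\{f(k)+g(j)\}=\min_k f(k)+\min_j g(j)$ in the last step.

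The main obstacle is the genus-additivity step; once that is in hand, everything else is a clean decoupling of a double minimum. One could alternatively import $g_3=g_1+g_2$ from the classical formula for the change of the $\delta$-invariant under blowup of a multiplicity-$m$ singular point, which is precisely the setting of Proposition~\ref{prop:blowup}, so that the lemma would only be needed in that context. I prefer the direct argument above, however, because it shows that the additivity is forced by the infimal-convolution hypothesis together with the asymptotic form of the semigroup counting functions, and thus makes transparent that the lemma is really a statement about Legendre-type transforms.
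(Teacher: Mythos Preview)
Your proof is correct and follows essentially the same approach as the paper's: first establish $g_3=g_1+g_2$ from the asymptotics of the counting functions, then substitute \eqref{eq:semigroupadd} into Proposition~\ref{prop:semigroups}, split $g_3-i=(g_1-(i-j))+(g_2-j)$, change variables, and decouple the double minimum. The only differences are cosmetic: the paper argues genus additivity by evaluating at a single large argument $2N$ and using $H_K(N-i)\ge H_K(N)-i$, whereas you phrase it as a two-sided asymptotic bound via $H_K(i)\ge\max(0,i-g_K)$; and you frame the computation explicitly as an infimal convolution turning into addition under a Legendre-type transform, which the paper leaves implicit.
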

\begin{proof}

First note that if the counting functions satisfy \eqref{eq:semigroupadd}, then
\begin{equation}\label{eq:genusadd}
g(K_3)=g(K_2)+g(K_1).
\end{equation}
This is due to two facts. First, for large $N$ (specifically, $N\geq 2g(K)$), we have $H_K(N)=N-g(K).$ Second, it is clear from the counting function definition that $H_K(N-i) \geq H_K(N)-i$ for any $i$. Choosing $N$ to be larger than twice the genus of any of the three knots, we see
\begin{align*} H_{K_3}(2N)=& \min_{j\in \Z} \{ H_{K_1}(2N-j)+H_{K_2}(j)\} \\
=& H_{K_1}(N)+H_{K_2}(N) \\
=& N-g(K_1)+N-g(K_2).
\end{align*} On the other hand, \( H_{K_3}(2N)=2N-g_3(K),\) so \eqref{eq:genusadd} follows.\\
Now we have
\begin{align*}
\Upsilon_{K_3}(t) =& -2 \min_{i\in \Z} \left\{H_{K_3}(i)+\frac{t}{2}(g(K_3)-i) \right\}\\
=& -2\min_{i\in \Z} \left\{ \min_{j\in \Z} \left\{ H_{K_1}(i-j)+H_{K_2}(j) \right\} +\frac{t}{2}(g(K_1)+g(K_2)-i) \right\} \\
=& -2\min_{i,j\in \Z} \left\{  H_{K_1}(i-j)+\frac{t}{2}(g(K_1)+j-i)+H_{K_2}(j)+\frac{t}{2}(g(K_2)-j) \right\}\\
=& -2\min_{\ell,j\in \Z} \left\{  H_{K_1}(\ell)+\frac{t}{2}(g(K_1)-\ell)+H_{K_2}(j)+\frac{t}{2}(g(K_2)-j) \right\}\\
=& -2\min_{\ell\in \Z} \left\{  H_{K_1}(\ell)+\frac{t}{2}(g(K_1)-\ell)\right\} -2\min_{j\in \Z}\left\{H_{K_2}(j)+\frac{t}{2}(g(K_2)-j) \right\}\\
=& \Upsilon_{K_1}(t) +\Upsilon_{K_2}(t)
\end{align*}

\end{proof}

\begin{proof}[Proof of Proposition \ref{prop:upsalltorus}]

If $a=1$, then $T_{a,b}$ is the unknot, which has $\Upsilon\equiv 0$, and the statement is clearly true. Now assume that it holds for all torus knots $T_{n,b}$ for all $n\leq a-1$, and consider $T_{a,b}$, with $a<b$ coprime.

The torus knot $T_{a,b}$ is the link of the singularity of the curve \[ x^a-y^b=0.\] Since $a<b$, this singularity has multiplicity $a$. Write $b=qa+r$ with $0<r<a$, and the singularity can be blownup $q$ times to obtain the singularity \[ x^a - y^{b-qa}=0,\] whose link is $T_{a,b-qa}$. By applying Proposition \ref{prop:blowup} to the first blowup, we have  \[ H_{T_{a,b}}(i) = \min_{j\in \Z} \left\{ H_{T_{a,b-a}}(i-j) +   H_{T_{a,a+1}}(j) \right\} ,\] and therefore by Lemma \ref{lem:upssum}, \[ \Upsilon_{T_{a,b}}(t) = \Upsilon_{T_{a,b-a}}(t)+\Upsilon_{T_{a,a+1}}(t).\]
Repeating this for each of the $q$ blowups, we get \[ \Upsilon_{T_{a,b}}(t) = \Upsilon_{T_{a,b-qa}}(t)+q\cdot \Upsilon_{T_{a,a+1}}(t).\]
Now $T_{a,b-qa} = T_{a,r}$ is isotopic to $T_{r,a}$, and the result follows by the inductive assumption.

\end{proof}

\begin{example}\label{ex:torusupscomp}
Consider the torus knot $T_{8,11}$. The Euclidean algorithm gives
\begin{align*}
11 =& 1\cdot8 +3, \\
8=& 2\cdot3 +2,\\
3=& 1\cdot2 +1,
\end{align*}
and reasoning as in the proof of Proposition \ref{prop:upsalltorus} gives
\begin{align*}
\Upsilon_{T_{8,11}}(t) =& \Upsilon_{T_{8,9}}(t)+\Upsilon_{T_{8,3}}(t) \\
=& \Upsilon_{T_{8,9}}(t) + \Upsilon_{T_{3,8}}(t)\\
=&  \Upsilon_{T_{8,9}}(t) + \Upsilon_{T_{3,4}}(t) + \Upsilon_{T_{3,5}}(t)\\
=& \Upsilon_{T_{8,9}}(t) + \Upsilon_{T_{3,4}}(t) + \Upsilon_{T_{3,4}}(t) + \Upsilon_{T_{3,2}}(t)\\
=& \Upsilon_{T_{8,9}}(t) + 2\cdot \Upsilon_{T_{3,4}}(t) + \Upsilon_{T_{2,3}}(t).
\end{align*}
\end{example}

As a consequence of calculating $\Upsilon_{T_{p,q}}$, one finds for example the following. \begin{Obs}
For a torus knot $T=T_{p,q}$, with $p<q$,  one has
\[\U_T(t)=-t\tau(T)=-tg_4(T) \for t\leq\frac{2}{p}\]
and
\[\U_T(t)\geq-tg_4(T)+\left\lfloor\frac{q}{p}\right\rfloor(p t-2)>-tg_4(T) \for \frac{2}{p}<t\leq 1.\]

\end{Obs}
However, it is the point of this paper that this follows  from Observation~\ref{Obs:UtforTpnp1} without actually having calculated $\Upsilon$  (and for a much larger class of knots including the knots $L$ in Theorem~\ref{thm:unknottingtotorusknotandbraidindex}); compare Proposition~\ref{prop:qpbraidindexdetection}.

We note that Wang~\cite{Wang} independently calculated that
\[\U_T(t)=-t\tau(T)=-tg_4(T) \for t\leq\frac{2}{p}\]
and
\[\U_T(t)>-tg_4(T) \for \frac{2}{p}<t\leq 1,\]
for torus knots and more generally algebraic knots of multiplicity (which equals the braid index) $p$.

\section{$\Upsilon$ for quasi positive knots and proofs of Theorems~\ref{thm:unknottingtotorusknotandbraidindex} and~\ref{thm:concbraidindex}}\label{sec:upstorusknotsandmainresult}

In this Section, we use the calculation of $\Upsilon$ for torus knots provided in Section~\ref{sec:Upsfortorusknots}
to obstruct the existence of cobordisms between knots that `contain' full-twists and knots that are `contained in' full-twists.
\subsection{Knots with optimal cobordisms to torus knots}
We first make `containing a full-twist' precise.
\begin{Definition}
For a knot $K$ we denote by $n_K$ the largest positive integer such that $d(K,T_{n_K,n_K+1})=g_4(K)-g_4(T_{n_K,n_K+1})$. Furthermore, we denote by $k_K$ the largest positive integer such that $d(K,T_{n_K,k_Kn_K+1})=g_4(K)-g_4(T_{n_K,k_Kn_K+1})$. If $n_K=1$, we define $k_K$ to be 0.
\end{Definition}
The notion of $n_K$ and $k_K$ is motivated by the study of knots that are closures of $n$ stranded braids $\beta$ such that $\beta=(\Delta^2)^k\alpha$, where $\alpha$ is a positive $n$ strand braid and $k$ is a positive integer. In other words, $n_K$ is meant to capture a notion of `maximal positive full-twist contained in $K$' up to concordance. Note that $T_{1,k}$ is the unknot for any $k$, so some additional convention is necessary to define $k_K$ when $n_K=1$; the particular choice of convention will have no bearing on what follows. In the opposite direction, we try to capture the notion of `the smallest number $m$ such that a knot $K$ is contained in a torus knot of braid index $m$'.
\begin{Definition}
For a knot $K$ we denote by $m_K$ the smallest positive integer such that there exists a positive integer $k$ such that $d(K,T_{m_K,km_K+1})=g_4(T_{m_K,km_K+1})-g_4(K)$. If no such positive integer exists we set $m_K=\infty$.
\end{Definition}
The following Theorem can be seen as an improvement on the triangle inequality given in~\eqref{eq:tri}.
\begin{theorem}\label{thm:triimprovmentforknotscontainingtwists}
For all knots $K$ and $L$, we have that
\[k_L(n_L-m_K)+g_4(K)-g_4(L)\leq d(K,L)
.\]
\end{theorem}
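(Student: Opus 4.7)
My plan is to apply the $\Upsilon$-invariant to $K$, $L$, and $L\sharp m(K)$ at the single distinguished value $t_0 := 2/m_K$, chosen precisely so that $t_0$ lies at the right endpoint of the linear regime of $\Upsilon_{T_K}$ (where $T_K := T_{m_K, km_K+1}$ realizes the definition of $m_K$) and strictly inside the ``twisted regime'' of $\Upsilon_{T_L}$ (where $T_L := T_{n_L, k_L n_L + 1}$). First I would dispense with the easy cases: if $m_K = \infty$, if $k_L = 0$ (equivalently $n_L = 1$), or if $m_K \geq n_L$, then $k_L(n_L - m_K) \leq 0$ and the conclusion follows from the triangle inequality~\eqref{eq:tri}. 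So I may assume $k_L \geq 1$ and $2 \leq m_K < n_L$; the corner case $m_K = 1$ will be handled separately at the end.

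The key step converts the defining optimal cobordisms into pointwise $\Upsilon$-bounds via additivity of $\Upsilon$ under connected sum and the slice genus bound~\eqref{eqn:upsgenusbound}. For $K$, the bound $|\Upsilon_K(t) - \Upsilon_{T_K}(t)| \leq t(g_4(T_K) - g_4(K))$, combined with $\Upsilon_{T_K}(t) = -t g_4(T_K)$ for $t \leq 2/m_K$ from Observation~\ref{Obs:UtforTpnp1}, yields $\Upsilon_K(t) \leq -t g_4(K)$; since~\eqref{eqn:upsgenusbound} provides the reverse inequality, one obtains the equality $\Upsilon_K(t) = -t g_4(K)$ on $[0, 2/m_K]$. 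For $L$, the analogous bound $|\Upsilon_L(t) - \Upsilon_{T_L}(t)| \leq t(g_4(L) - g_4(T_L))$, combined with $\Upsilon_{T_L}(t) \geq -t g_4(T_L) + k_L(n_L t - 2)$ on $(2/n_L, 1]$ from Observation~\ref{Obs:UtforTpnp1}, yields $\Upsilon_L(t) \geq -t g_4(L) + k_L(n_L t - 2)$ on the same interval.

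Finally, applying~\eqref{eqn:upsgenusbound} to $L \sharp m(K)$, whose slice genus is $d(K,L)$, gives $\Upsilon_L(t) - \Upsilon_K(t) \leq t\, d(K,L)$. Evaluating at $t_0 = 2/m_K$, which lies in $(2/n_L, 1]$ precisely because $2 \leq m_K < n_L$, and using the identity $n_L t_0 - 2 = t_0(n_L - m_K)$, I obtain
\[ t_0 \bigl( g_4(K) - g_4(L) + k_L(n_L - m_K) \bigr) \leq t_0\, d(K,L), \]
and dividing by $t_0 > 0$ gives the claim. The degenerate case $m_K = 1$ forces $K$ to be slice (since $T_{1, k+1}$ is the unknot), so $d(K,L) = g_4(L)$ and the claim reduces to $k_L(n_L - 1) \leq 2 g_4(L)$; this follows from $g_4(L) \geq g_4(T_L) = k_L n_L(n_L - 1)/2$, itself a consequence of the optimal cobordism witnessing $n_L$ and $k_L$.

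The genuine content lies in the choice $t_0 = 2/m_K$: this is the unique value at which $\Upsilon_K$ is still pinned to the slice genus bound while $\Upsilon_L$ has strictly overshot it by the twisting term $k_L(n_L t - 2)$. Every other step is a routine combination of additivity of $\Upsilon$ and~\eqref{eqn:upsgenusbound}, so beyond this engineering of the evaluation point I do not anticipate a real obstacle.
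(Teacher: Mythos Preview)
Your proof is correct and follows essentially the same route as the paper: evaluate $\Upsilon$ at $t_0 = 2/m_K$ and combine the equality $\Upsilon_K(t_0) = -t_0 g_4(K)$ with the lower bound $\Upsilon_L(t_0) \geq -t_0 g_4(L) + k_L(n_L t_0 - 2)$, both of which the paper packages as Proposition~\ref{prop:qpbraidindexdetection} (your inline derivation is exactly its proof). Your explicit treatment of the corner case $m_K = 1$ is a small improvement, since the paper's computation tacitly requires $t_0 = 2/m_K \leq 1$.
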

Before we provide a proof, we note that
Theorem~\ref{thm:triimprovmentforknotscontainingtwists} implies Theorem~\ref{thm:unknottingtotorusknotandbraidindex} by the following two lemmata.

For quasi-positive knots and (more generally) knots for which the slice-Bennequin inequality is sharp, $m_K$ is bounded by the braid index:
\begin{lemma}~\label{lem:m_Kforquasipos} Let $L$ be a knot that arises as the closure of an $n$-braid $\beta$ for which the slice-Bennequin inequality is an equality; i.e.
\begin{equation}\label{eq:sliceBennequinissharp}
g_4(L)=\frac{\l(\beta)-n+1}{2}.\end{equation}
Then the braid index of $L$ is larger than or equal to $m_L$.
\end{lemma}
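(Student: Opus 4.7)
The plan is to produce, for some positive integer $k$, an optimal cobordism from $L$ to the torus knot $T_{b,kb+1}$, where $b$ denotes the braid index of $L$; such a cobordism realizes the lower bound in the triangle inequality~\eqref{eq:tri} and directly witnesses $m_L\leq b$. The two main ingredients will be the generalized Jones conjecture, which will be used to replace $\beta$ by a slice-Bennequin-sharp representation on $b$ strands, and a Garside-theoretic construction exhibiting that $b$-braid as a left factor of the standard torus braid $(a_1\cdots a_{b-1})^{kb+1}$ inside the positive braid monoid $B_b^+$.

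For the reduction step, I would first rewrite the hypothesis as the statement that the self-linking number $\l(\beta)-n$ of $\beta$ equals $2g_4(L)-1$, which by slice-Bennequin is the maximum self-linking number realized by any braid representation of $L$. The generalized Jones conjecture of Dynnikov--Prasolov and LaFountain--Menasco then forces every braid-index representation of $L$ to realize this same maximum; hence for any $b$-braid $\gamma$ with $\widehat{\gamma}=L$ one has $\l(\gamma)=2g_4(L)+b-1$, so $\gamma$ too is slice-Bennequin-sharp. Fix such a $\gamma$.

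Next, since $\Delta_b^2=(a_1\cdots a_{b-1})^b$ is central in $B_b$ and $B_b^+$ is a Garside monoid, for $k$ sufficiently large the element $\Delta_b^{2k}\gamma^{-1}$ lies in $B_b^+$. Thus $\tau:=\Delta_b^{2k}\gamma^{-1}(a_1\cdots a_{b-1})$ is a positive $b$-braid with $\gamma\tau=(a_1\cdots a_{b-1})^{kb+1}$, whose closure is $T_{b,kb+1}$. Counting writhes gives $\l(\tau)=(kb+1)(b-1)-\l(\gamma)$, and resolving the $\l(\tau)$ positive crossings of $\tau$ one by one by saddle moves produces a smooth connected cobordism from $T_{b,kb+1}$ to $L$ of genus $\l(\tau)/2$. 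A short arithmetic verification using $\l(\gamma)=2g_4(L)+b-1$ shows $\l(\tau)/2=g_4(T_{b,kb+1})-g_4(L)$. Choosing $k$ further so that $g_4(T_{b,kb+1})\geq g_4(L)$, the triangle inequality~\eqref{eq:tri} then forces $d(L,T_{b,kb+1})=g_4(T_{b,kb+1})-g_4(L)$, giving $m_L\leq b$.

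I expect the main obstacle to be the reduction step: a direct application of the cobordism construction to the given $n$-braid $\beta$ only yields $m_L\leq n$, which in general is strictly weaker than the target $m_L\leq b$. Bridging this gap is precisely where the generalized Jones conjecture is essential; the Garside argument for $\tau$ and the resulting saddle cobordism are by contrast routine.
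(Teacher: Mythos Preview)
Your proposal is correct and follows essentially the same route as the paper: transfer slice-Bennequin sharpness to a braid-index representation via the generalized Jones conjecture (the paper isolates this as a separate lemma), then multiply on the right by a positive $b$-braid to reach $(a_1\cdots a_{b-1})^{kb+1}$ and smooth those extra crossings to obtain an optimal cobordism to $T_{b,kb+1}$. Your Garside argument for the existence of $\tau$ is slightly more explicit than the paper, which simply asserts that such a positive braid $\alpha$ exists, but otherwise the two arguments coincide.
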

Here the \emph{algebraic length} or \emph{writhe} $l(\beta)$ of an $n$-braid $\beta$ is given by the exponent homomorphism $l$ defined from the braid group on $n$ strands to $\Z$ by mapping the positive generators $a_i$ to $1$.
\begin{lemma}\label{lem:n_Kandt_Kforquasipos}
If a knot $K$ is the closure of an $n$-braid $\beta$ of the form $(a_1\cdots a_{n-1})^{nk+1}\alpha$, for positive integers $n$ and $k$ and a quasi-positive $n$-braid $\alpha$, then \[n_K=m_K=n\et k_K\geq k.\]
The same is true if $\beta=(\Delta^2)^k\alpha$ for some positive braid $\alpha$.
\end{lemma}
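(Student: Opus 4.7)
The plan is to establish the inequalities $n_K\ge n$, $k_K\ge k$, and $m_K\le n$ via explicit cobordism constructions, and then upgrade them to the equalities $n_K=m_K=n$ via a general bound $n_K\le m_K$ proved using $\Upsilon$.

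For $n_K\ge n$ and $k_K\ge k$, I would construct optimal descending cobordisms. Writing $\alpha=\prod_{i=1}^{l_\alpha}w_i a_{j_i}w_i^{-1}$, each conjugate generator is an isotopy-trivial positive band on the closure of $\beta$, and cutting all $l_\alpha$ of these bands produces a cobordism from $K$ to $\widehat{(a_1\cdots a_{n-1})^{nk+1}}=T_{n,nk+1}$ of Euler characteristic $-l_\alpha$. Since both endpoints are quasi-positive, Rudolph's slice-Bennequin equality gives $g_4(K)-g_4(T_{n,nk+1})=l_\alpha/2$, matching the Euler-characteristic count; hence the cobordism is optimal descending and $k_K\ge k$. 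Continuing by further removing the $n(n-1)(k-1)$ positive bands of $(a_1\cdots a_{n-1})^{n(k-1)}$ produces an optimal descending cobordism to $T_{n,n+1}$ and gives $n_K\ge n$.

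For $m_K\le n$, I would construct an optimal ascending cobordism to $T_{n,nk'+1}$ for some $k'$. By Garside's normal form, for $k'$ sufficiently large the braid element $\beta^{-1}(a_1\cdots a_{n-1})^{nk'+1}$ admits a positive-word representative $\gamma$; attaching the $l(\gamma)$ positive bands of $\gamma$ produces a cobordism from $K$ to $\widehat{\beta\gamma}=T_{n,nk'+1}$ of Euler characteristic $-l(\gamma)$, which by slice-Bennequin equals $2(g_4(T_{n,nk'+1})-g_4(K))$, hence optimal ascending. To then upgrade to the equalities it suffices to establish $n_K\le m_K$ in general. From an optimal ascending cobordism $K\to T_{m_K,k''m_K+1}$ combined with $\Upsilon_T(t)=-tg_4(T)$ on $[0,2/m_K]$ (Observation~\ref{Obs:UtforTpnp1}), the slice-genus bound $\Upsilon_K(t)\ge-tg_4(K)$, and the cobordism property $|\Upsilon_K(t)-\Upsilon_T(t)|\le t\cdot d(K,T)$, one deduces $\Upsilon_K(t)=-tg_4(K)$ on $[0,2/m_K]$. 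Dually, from an optimal descending cobordism $K\to T_{n_K,n_K+1}$ and the strict inequality $\Upsilon_T(t)>-tg_4(T)$ on $(2/n_K,1]$ (Observation~\ref{Obs:UtforTpnp1}), one gets $\Upsilon_K(t)>-tg_4(K)$ on $(2/n_K,1]$. When the overlap $(2/n_K,2/m_K]\cap[0,1]$ is non-empty (which for $n\ge 3$ covers the relevant cases; $n\le 2$ are handled directly), the two conclusions contradict, so $n_K\le m_K$, and combined with $n_K\ge n\ge m_K$ we get $n_K=m_K=n$.

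Finally, the second case $\beta=(\Delta^2)^k\alpha$ with $\alpha$ positive reduces to the first by cyclic conjugation: centrality of $\Delta^{2k}$ means every conjugate of $\beta$ has the form $\Delta^{2k}\alpha'$ with $\alpha'$ a cyclic conjugate of $\alpha$; using that $\alpha$ is a positive braid whose permutation is an $n$-cycle and whose length is at least $n-1$, some such $\alpha'$ admits $a_1\cdots a_{n-1}$ as a left divisor in the positive braid monoid, yielding $\beta\sim(a_1\cdots a_{n-1})^{nk+1}\alpha''$ with $\alpha''$ positive. The main obstacles of the proof are the fine $\Upsilon$ argument translating cobordism data into equalities and strict inequalities on the intervals $[0,2/m_K]$ and $(2/n_K,1]$, and the positive-monoid divisibility claim used to reduce the second case to the first.
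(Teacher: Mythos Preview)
Your treatment of the first case is correct and essentially follows the paper's route: the explicit descending cobordism to $T_{n,nk+1}$ gives $n_K\ge n$ and $k_K\ge k$, and your $\Upsilon$ argument for $n_K\le m_K$ is precisely the content of Proposition~\ref{prop:qpbraidindexdetection} (which the paper invokes through Theorem~\ref{thm:triimprovmentforknotscontainingtwists}). Your Garside-based ascending cobordism for $m_K\le n$ is in fact a minor simplification over the paper, which establishes this via Lemma~\ref{lem:m_Kforquasipos} and hence the generalized Jones conjecture; since you only need the bound at the $n$-strand level (not at the braid-index level), the direct construction suffices.

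The gap is in your reduction of the second case to the first. The claim that some cyclic conjugate $\alpha'$ of the positive word $\alpha$ has $a_1\cdots a_{n-1}$ as a left divisor in $B_n^+$ is false. Take $n=4$ and $\alpha=a_3a_2a_1$: its underlying permutation is the $4$-cycle $(1\,2\,3\,4)$, its length is $n-1=3$, but its three cyclic conjugates $a_3a_2a_1$, $a_2a_1a_3$, $a_1a_3a_2$ are pairwise distinct permutation braids of length $3$, none equal to $a_1a_2a_3$ (they induce different $4$-cycles), so none is left-divisible by it. Thus $\beta=\Delta^{2k}a_3a_2a_1$ is a legitimate instance of the second case that your reduction does not cover. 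The paper avoids this by treating the second case directly: since the closure is a knot, each $a_i$ occurs in $\alpha$, and smoothing all but one occurrence of each $a_i$ in the $\alpha$-part produces an optimal cobordism from $K$ to $T_{n,nk+1}$. Your Garside ascending cobordism and your $\Upsilon$ inequality then apply to the second case without change, so only the descending step needs this direct construction in place of the reduction.
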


To prove Lemma~\ref{lem:m_Kforquasipos} we invoke the generalized Jones conjecture~\cite{DynnikovPrasolov_13,LaFountainMenasco_14}, which states that if a knot $K$ with braid index $b$ arises as the closure of an $n$-braid $\beta$ and a $b$-braid $\beta'$, then \begin{equation}\label{eq:genJonesConj}|\l(\beta)-\l(\beta')|\leq n-b.\end{equation} In fact, we use the following 
consequence of the generalized Jones conjecture.
\begin{lemma}\label{lem:Jonesimplies} Let $K$ be a knot and denote its braid index by $b$.
If $K$ arises as the closure of an $n$-braid $\beta$ for which the slice-Bennequin inequality is an equality, i.e.
\[g_4(K)=\frac{\l(\beta)-n+1}{2},\]
then the same is true for all $b$-braids $\beta'$ with closure $K$; i.e.
\[
g_4(K)=\frac{\l(\beta')-b+1}{2}.
\]
\end{lemma}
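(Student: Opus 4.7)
The plan is to combine the generalized Jones conjecture \eqref{eq:genJonesConj} with the slice-Bennequin inequality, once in each direction, to pin down $\ell(\beta')$ exactly.

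First I would recall the slice-Bennequin inequality: for any $b$-braid $\beta'$ whose closure is $K$, one has $g_4(K)\geq \frac{\ell(\beta')-b+1}{2}$. So the nontrivial direction is to establish the reverse inequality. Rewriting the sharpness hypothesis as $2g_4(K) = \ell(\beta)-n+1$ and the slice-Bennequin bound for $\beta'$ as $2g_4(K) \geq \ell(\beta')-b+1$, subtraction yields
\[
\ell(\beta)-\ell(\beta')\;\geq\; n-b.
\]

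Next I would invoke the generalized Jones conjecture \eqref{eq:genJonesConj} of Dynnikov--Prasolov and LaFountain--Menasco, applied to the pair $(\beta,\beta')$ of braid representatives of $K$ on $n$ and $b$ strands respectively, which gives
\[
|\ell(\beta)-\ell(\beta')|\;\leq\; n-b.
\]
Since $n\geq b$ by the definition of braid index, $n-b\geq 0$, so combining with the previous display forces $\ell(\beta)-\ell(\beta')=n-b$. Rearranging, $\ell(\beta')-b+1 = \ell(\beta)-n+1 = 2g_4(K)$, which is exactly the claimed equality.

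The argument is essentially a one-line consequence of the two inputs, so there is no real obstacle once one has the generalized Jones conjecture available as a black box; the only subtle point is making sure the inequality \eqref{eq:genJonesConj} applies even when one of the braids has the minimal number of strands $b$, which is precisely the setting in which that conjecture is formulated.
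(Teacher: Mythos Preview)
Your argument is correct and is essentially the same as the paper's: both combine the generalized Jones conjecture $|\ell(\beta)-\ell(\beta')|\leq n-b$ with the slice-Bennequin inequality for $\beta'$ and the sharpness hypothesis for $\beta$ to sandwich $\frac{\ell(\beta')-b+1}{2}$ between $g_4(K)$ and itself. The only cosmetic difference is that the paper writes this as a single chain $g_4(K)=\frac{\ell(\beta)-n+1}{2}\leq \frac{\ell(\beta')-b+1}{2}\leq g_4(K)$, whereas you first extract $\ell(\beta)-\ell(\beta')\geq n-b$ and then force equality.
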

\begin{proof}[Proof of Lemma~\ref{lem:Jonesimplies}]
We calculate
\[g_4(K)=\frac{\l(\beta)-n+1}{2}\leq \frac{\l(\beta')-b+1}{2}\leq g_4(K),\]
where the first inequality invokes the generalized Jones conjecture~\eqref{eq:genJonesConj} and the second inequality is the slice-Bennequin inequality~\cite{rudolph_QPasObstruction} for the braid $\beta'$.\end{proof}

In the arguments that follow, we will use cobordisms between knots constructed using an even number of \emph{band moves} or \emph{saddle moves}---a cobordism given by one saddle guided by an embedded arc in $S^3$ starting and ending on the knot (or link) in question. 
Figure~\ref{fig:saddlemoveandcrossingchange} provides a diagrammatic description of a saddle move.
\begin{figure}[h]
\centering
\def\svgscale{1.9}
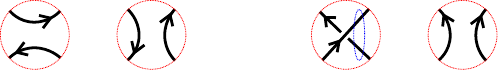
\caption{Diagrammatic representations of a saddle move (left) and a smoothing of a crossing (right). For the smoothing of a crossing we indicate (blue) of how to view it as a saddle move.}
\label{fig:saddlemoveandcrossingchange}
\end{figure}
In fact, most of our cobordisms will arise from saddle moves that are most easily seen as smoothings of crossings in a diagram for the knot in question; compare Figure~\ref{fig:saddlemoveandcrossingchange}.

\begin{proof}[Proof of Lemma~\ref{lem:m_Kforquasipos}]
Let $b$ be the braid index of $L$ and let $\beta'$ be a $b$-braid with closure $L$.
Let $\alpha$ be a positive $b$-braid such that $\beta'\alpha$ has closure $T_{b,kb+1}$ for some positive integer $k$. In particular, there exists a cobordism $C$ between $L$ and $T_{b,kb+1}$ of genus
\begin{equation}\label{eq:genuscobanybraidtotorusknot}\frac{\l(\alpha)}{2}=g_4(T_{b,kb+1})-\frac{\l(\beta')-b+1}{2}.
\end{equation}
Indeed, the braid $\beta'$ can be obtained from $\beta'\alpha$ by deleting $\l(\alpha)$ generators $a_i$; and so, since deleting a generator can be realized in a braid diagram by smoothing a crossing, $C$ can be taken to be the cobordism given by smoothing the $\l(\alpha)$ crossings corresponding to the generators of $\alpha$.
Since $L$ arises as the closure of a braid for which the slice--Bennequin inequality is an equality,
Lemma~\ref{lem:Jonesimplies} implies that \[g_4(L)=\frac{\l(\beta')-b+1}{2}.\]
Hence by~\eqref{eq:genuscobanybraidtotorusknot},
the genus of $C$ is $g_4(T_{b,kb+1})-g_4(L)$. Since $d(T_{b,kb+1},L)\leq g(C)$, and the triangle inequality \eqref{eq:tri} says that \[ d(T_{b,kb+1},L) \geq g_4(T_{b,kb+1})-g_4(L),\] it follows that \[d(T_{b,kb+1},L) = g(C) = g_4(T_{b,kb+1})-g_4(L) ,\] and so $b\geq m_L$.
\end{proof}

\begin{proof}[Proof of Lemma~\ref{lem:n_Kandt_Kforquasipos}]
We prove $n_K=m_K=n$ by establishing $n\geq m_K$, $m_K\geq n_K$, and $n_K\geq n$.
First, we observe that Lemma~\ref{lem:m_Kforquasipos} implies $n\geq m_K$.
Next, we show $m_K\geq n_K$. Indeed, Theorem~\ref{thm:triimprovmentforknotscontainingtwists} (where $L$ is chosen to be $K$) yields
\[k_K(n_K-m_K)\leq d(K,K)=0,\]
and, therefore, $m_K\geq n_K$ (since $k_K$ is a positive integer).
Finally, we show $n_K\geq n$.
If $\beta=(a_1\cdots a_{n-1})^{nk+1}\alpha$ for some quasi-positive braid $\alpha$, then $\alpha$ can be written as a product of $\l(\alpha)$ conjugates of generators $a_i$ by the definition of quasi-positivity. Smoothing all these $a_i$ in the $\alpha$ part of $(a_1\cdots a_{n-1})^{nk+1}\alpha$ yields a cobordism $C$ between $K$ and $T_{n,{nk+1}}$ of genus
\[\frac{\l(\alpha)}{2}=\frac{(n-1)nk+\l(\alpha)}{2}-\frac{(n-1)nk}{2}=g_4(K)-g_4(T_{n,{nk+1}}).\]
If $\beta=(\Delta^2)^k\alpha$ for some positive braid $\alpha$, then smoothing all but one $a_i$ in the $\alpha$ part of $(\Delta^2)^k\alpha$ for all $1\leq i\leq n-1$ yields a cobordism between $K$ and $T_{n,nk+1}$ of genus
\[\frac{\l(\alpha)-(n-1)}{2}=
g_4(K)-g_4(T_{n,{nk+1}}).\]
Therefore,
\begin{equation}\label{eq:d=g-g}d(K,T_{n,nk+1})
=g_4(K)-g_4(T_{n,nk+1})
.\end{equation}
In particular, \begin{align*}d(K,T_{n,n+1})&\leq d(K,T_{n,nk+1}) + d(T_{n,nk+1},T_{n,n+1})\\&= g_4(K)-g_4(T_{n,nk+1})+g_4(T_{n,nk+1})-g_4(T_{n,n+1})\\&=g_4(K)-g_4(T_{n,n+1})\leq d(K,T_{n,n+1}),\end{align*} and so $n_K\geq n$.

We conclude the proof by noting that $k_K\geq k$ follows from~\eqref{eq:d=g-g}, $n=n_K$, and the definition of $k_K$.
\end{proof}
\subsection{Proof of Theorem~\ref{thm:triimprovmentforknotscontainingtwists}}
We use the calculation of $\Upsilon$ for torus knots (in fact only Observation~\ref{Obs:UtforTpnp1}) to deduce the following.
\begin{prop}\label{prop:qpbraidindexdetection}
For all knots $K$
, we have
\[\U_K(t)=-tg_4(K)=-t\tau(K) \for t\leq\frac{2}{m_K} \et\]
\begin{equation}
\label{eq:upsilonnotslice}
\U_K(t)\geq-tg_4(K)+k_K(n_K t-2)>-tg_4(K) \for \frac{2}{n_K}<t\leq 1.\end{equation}
\end{prop}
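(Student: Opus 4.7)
The plan is to derive both assertions from one standard consequence of additivity of $\Upsilon$ under connected sum, the mirror identity $\Upsilon_{m(K)}=-\Upsilon_K$, and the slice-genus bound~\eqref{eqn:upsgenusbound}: namely, the cobordism inequality
\[|\Upsilon_{K_1}(t)-\Upsilon_{K_2}(t)|\leq t\cdot d(K_1,K_2)\for t\in[0,1].\]
This transfers $\Upsilon$-estimates from one knot to another whose cobordism distance we control. The content of the proposition is that the definitions of $m_K$ and of $(n_K,k_K)$ encode exactly such optimal distance estimates from $K$ to torus knots of the form $T_{n,kn+1}$, for which Observation~\ref{Obs:UtforTpnp1} supplies the needed values of $\Upsilon$.

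For the first assertion, assume $m_K<\infty$ (else the interval is trivial) and pick a positive integer $k$ realizing the definition, so that for $T=T_{m_K,km_K+1}$ we have $d(K,T)=g_4(T)-g_4(K)$. The cobordism inequality then gives $\Upsilon_K(t)\leq \Upsilon_T(t)+t(g_4(T)-g_4(K))$ on $[0,1]$. For $t\leq 2/m_K$, Observation~\ref{Obs:UtforTpnp1} says $\Upsilon_T(t)=-tg_4(T)$, so the right-hand side collapses to $-tg_4(K)$. The matching lower bound $\Upsilon_K(t)\geq -tg_4(K)$ is immediate from~\eqref{eqn:upsgenusbound}, and the resulting linearity of $\Upsilon_K$ near $0$ forces $\tau(K)=g_4(K)$ since $-\tau(K)$ is the right-derivative of $\Upsilon_K$ at $0$.

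For the second assertion we proceed dually. If $n_K=1$ the interval $(2/n_K,1]$ is empty, so assume $n_K\geq 2$; in this case $k_K\geq 1$, since $T_{n_K,n_K+1}$ already witnesses $k=1$ as admissible in the definition of $k_K$. Set $T'=T_{n_K,k_Kn_K+1}$, so $d(K,T')=g_4(K)-g_4(T')$ by definition. The cobordism inequality now gives $\Upsilon_K(t)\geq \Upsilon_{T'}(t)-t(g_4(K)-g_4(T'))$, and on $(2/n_K,1]$ Observation~\ref{Obs:UtforTpnp1} yields $\Upsilon_{T'}(t)\geq -tg_4(T')+k_K(n_Kt-2)$. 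Substituting produces the claimed bound $\Upsilon_K(t)\geq -tg_4(K)+k_K(n_Kt-2)$, and the final strict inequality follows because $k_K\geq 1$ and $n_Kt>2$.

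No step of this plan is genuinely hard; the conceptual content is simply that $m_K$ and $(n_K,k_K)$ are designed precisely to feed the cobordism inequality in opposite directions, converting the small-$t$ linearity and the large-$t$ rise of $\Upsilon$ for the model torus knots $T_{n,kn+1}$ into an upper and a lower bound for $\Upsilon_K$, respectively. The only non-formal verifications are $k_K\geq 1$ when $n_K\geq 2$ (immediate from the existence of $T_{n_K,n_K+1}$) and the observation that the single torus knot $T_{m_K,km_K+1}$ from the definition of $m_K$ already suffices to force the sharp upper bound across the entire interval $[0,2/m_K]$.
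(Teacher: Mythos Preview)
Your proof is correct and follows essentially the same approach as the paper: both arguments squeeze $\Upsilon_K(t)$ using the slice-genus bound, additivity under connected sum, and Observation~\ref{Obs:UtforTpnp1} applied to the torus knots $T_{m_K,km_K+1}$ and $T_{n_K,k_Kn_K+1}$ coming from the definitions of $m_K$ and $(n_K,k_K)$. Your only difference is cosmetic---you package the relevant consequence of additivity and the slice-genus bound as the single cobordism inequality $|\Upsilon_{K_1}(t)-\Upsilon_{K_2}(t)|\leq t\,d(K_1,K_2)$, whereas the paper carries out the same manipulation inline; you also make explicit the identification $\tau(K)=g_4(K)$ via the derivative of $\Upsilon_K$ at $0$ and handle the degenerate cases $m_K=\infty$ and $n_K=1$, which the paper leaves implicit.
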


Note that it is not true in general that $\Upsilon$ detects the slice genus, nor that $g_4(K)=\tau(K)$. Part of the claim of Propostion \ref{prop:qpbraidindexdetection} is that these are both true for all $K$ for which $m_K < \infty$.

In particular, if $K$ is a the closure of a quasi-positive $m$-braid, then $\Upsilon_K(t)=-t\tau(K)=-tg_4(K)$ for $t\leq \frac{2}{m}$.
As a consequence (compare~\cite[Lemma~4]{rudolph_QPasObstruction}), for $t\leq\frac{2}{m}$, $-\frac{\Upsilon(t)}{t}$ satisfies the slice-Bennequin inequality
\begin{equation}\label{eq:Bennequin-type}-\frac{\Upsilon_{\widehat{\beta}}(t)}{t}\geq \frac{\l(\beta)-m+1}{2}\quad\text{ for all }m\text{-braids }\beta;\end{equation}
while, for $\frac{2}{m}<t\leq 1$, $-\frac{\Upsilon(t)}{t}$ fails to satisfy such an inequality.
The fact that whether or not $\Upsilon(t)$ (for a fixed $t$) satisfies a slice-Bennequin inequality depends on the number of strands should not be seen as a draw back but as a feature! Indeed, we use this to prove Theorems~\ref{thm:unknottingtotorusknotandbraidindex} and~\ref{thm:triimprovmentforknotscontainingtwists}.


\begin{proof}[Proof of Proposition~\ref{prop:qpbraidindexdetection}]
For the proof of the first part we fix $t\in(0,\frac{2}{m_K}]$. 
The quantity $\frac{\Upsilon(\cdot)}{t}$ is a concordance invariant, additive on connected sums, and its absolute value is a lower bound for the slice genus~\cite{OSS_2014}.
Therefore, one has
\[\Upsilon_K(t){\geq} -tg_4(K)\et td(K,T_{m_K,km_K+1})\geq \Upsilon_K(t)-\Upsilon_{T_{m_K,km_K+1}}(t).\]

Let $k$ be a positive integer such that $d(K, T_{m_K,km_K+1})=g_4(T_{m_K,km_K+1})-g_4(K)$, which exists by the definition of $m_k$.
We calculate
\begin{align*}
\Upsilon_K(t)\geq -tg_4(K)&=-tg_4(K)+tg_4(T_{m_K,km_K+1})-tg_4(T_{m_K,km_K+1})\\&=td(K,T_{m_K,km_K+1})-tg_4(T_{m_K,km_K+1})
\\&=td(K,T_{m_K,km_K+1})+\Upsilon_{T_{m_K,km_K+1}}(t)\\&\geq
\Upsilon_K(t)-\Upsilon_{T_{m_K,km_K+1}}(t)+\Upsilon_{T_{m_K,km_K+1}}(t)=\Upsilon_K(t),\end{align*}
where the third equality uses the first part of Observation~\ref{Obs:UtforTpnp1}. Thus, we have
$\Upsilon_K(t)=-tg_4(K)$. 

For the proof of the second part we fix $t\in(\frac{2}{n_K},1]$. By the definition of $n_K$ and $k_K$, there exists a cobordism $C$ between $K$ and the torus knot $T=T_{n_K,k_Kn_K+1}$ of genus
\[g(C)=d(K,T)=g_4(T\sharp m({K}))=g_4(K)-g_4(T)
.\]
Combining the second part of Observation~\ref{Obs:UtforTpnp1} with the fact that $|\frac{\Upsilon}{t}|$ is a lower bound for the slice genus, we find
\begin{align*}\Upsilon_K(t)+tg_4(T)-k_K(tn_K-2)&\geq\Upsilon_K(t)-\Upsilon_{T}(t)
=-\Upsilon_{T\sharp m({K})}(t)\\&\geq-tg_4(T\sharp m({K}))=-tg_4(K)+tg_4(T),\end{align*}
which in turn yields
\[\U_K(t)\geq-tg_4(K)+k_K(n_K t-2)>-tg_4(K).\]
\end{proof}

With Proposition~\ref{prop:qpbraidindexdetection}, the proof of Theorem~\ref{thm:triimprovmentforknotscontainingtwists} is immediate.
\begin{proof}[Proof of Theorem~\ref{thm:triimprovmentforknotscontainingtwists}]
We only consider the case where $m_K<n_L$, since otherwise the statement of Theorem~\ref{thm:triimprovmentforknotscontainingtwists} is contained in the triangle inequality~\eqref{eq:tri}.
Using that $|\frac{\Upsilon(t)}{t}|$ is a bound for the slice genus, we have
\[d(K,L)=g_4(L\sharp m({K}))\geq\frac{m_K}{2}\Upsilon_{L\sharp m({K})}(\frac{2}{m_K})
=\frac{m_K}{2}\Upsilon_L(\frac{2}{m_K})-\frac{m_K}{2}\Upsilon_K(\frac{2}{m_K}).\]
Proposition~\ref{prop:qpbraidindexdetection} yields
\[\frac{m_K}{2}\Upsilon_L(\frac{2}{m_K})-\frac{m_K}{2}\Upsilon_K(\frac{2}{m_K})\geq
-\frac{m_K}{2}(\frac{2}{m_K}g_4(L)-k_L(n_L\frac{2}{m_K}-2))+\frac{m_K}{2}\frac{2}{m_K}g_4(K),\]
and, therefore,
\[d(K,L)\geq -g_4(L)+k_L(n_L-m_K)+g_4(K).\]
\end{proof}
\subsection{Proof of Theorem~\ref{thm:concbraidindex}}
The proof of Theorem~\ref{thm:concbraidindex} provided below can be immediately adapted to yield
\begin{prop}\label{prop:concbraidindex}
Let $L$ be a knot. If $k_L\geq n_L-1$, then all knots $K$ that are concordant to $L$ have braid index at least $n_L$. \qed
\end{prop}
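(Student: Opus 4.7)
The plan is to argue by contradiction: suppose some knot $K$ concordant to $L$ has braid index $b = b(K) < n_L$. Since $\Upsilon$, $g_4$, and the cobordism-distance-based quantities $n_\cdot, k_\cdot, m_\cdot$ are all concordance invariants, one has $g_4(K) = g_4(L)$, $n_K = n_L$, $k_K = k_L \geq n_L - 1$, $m_K = m_L$, and $\Upsilon_K \equiv \Upsilon_L$. When $m_L$ is finite (as in the setting of Theorem~\ref{thm:concbraidindex}, where Lemma~\ref{lem:m_Kforquasipos} gives $m_L \leq n$), I would first apply Theorem~\ref{thm:triimprovmentforknotscontainingtwists} to $(K, L)$: the hypothesis $d(K,L) = 0$ collapses the inequality to $0 \geq k_L(n_L - m_K)$, forcing $m_K \geq n_L$. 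The proof then reduces to showing $b(K) \geq m_K$.

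To this end I would adapt the cobordism construction in the proof of Lemma~\ref{lem:m_Kforquasipos} beyond the quasi-positive setting. Letting $\gamma$ be a $b$-braid representative of $K$, one first changes each negative crossing of $\gamma$ into a positive one via a genus-one saddle cobordism, producing a positive $b$-braid $\gamma^+$; then one extends $\gamma^+$ by positive generators to reach the torus braid $(a_1\cdots a_{b-1})^{bN+1}$ for $N\gg 0$ (possible by Garside theory, since every positive braid left-divides $\Delta^{2N}$ for $N$ large enough). Smoothing the appended positive crossings and composing with the crossing-change cobordism yields a cobordism from $K$ to $T_{b, bN+1}$ of genus $g_4(T_{b, bN+1}) - (e(\gamma) - b + 1)/2$. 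Combined with the $\Upsilon$ slice-genus bound applied to $K \sharp m(T_{b, bN+1})$ and Observation~\ref{Obs:UtforTpnp1}, this produces the upper bound $\Upsilon_K(2/b) \leq -(e(\gamma) - b + 1)/b$. In parallel, the second half of Proposition~\ref{prop:qpbraidindexdetection} applied to $K$ at $t = 2/b > 2/n_K$ gives
\[\Upsilon_K(2/b) \geq -\tfrac{2}{b}g_4(K) + \tfrac{2k_L(n_L - b)}{b}.\]

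Combining these two bounds produces the key inequality
\[k_L(n_L - b) \leq g_4(K) - \tfrac{e(\gamma) - b + 1}{2},\]
which under $k_L \geq n_L - 1$ and $b \leq n_L - 1$ forces the slice-Bennequin defect of $K$ at braid index $b$ to be at least $n_L - 1 \geq 1$ and in particular strictly positive. The hard part will be converting this quantitative defect into an outright contradiction with $b < n_L$. The strategy I anticipate is to exploit that $\Upsilon_K \equiv \Upsilon_L$ throughout $[0, 2]$ (not just at $t = 2/b$) and combine this with the piecewise-linear structure of both functions at the breakpoint $t = 2/n_L$, where the first half of Proposition~\ref{prop:qpbraidindexdetection} (applicable whenever $m_K = m_L$ is finite) forces $\Upsilon_K(t) = -tg_4(K)$ on $[0, 2/m_K]$; the consistency of this linear behavior with the cobordism-derived upper bound should rule out the nonzero defect and yield $b(K) \geq m_K \geq n_L$, contradicting $b < n_L$. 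The remaining case $m_L = \infty$, relevant only for Proposition~\ref{prop:concbraidindex} but not for Theorem~\ref{thm:concbraidindex}, is handled by the analogous argument using only the second half of Proposition~\ref{prop:qpbraidindexdetection} at the expense of a more delicate comparison of the piecewise-linear functions near $t = 2/n_L$.
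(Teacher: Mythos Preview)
Your proposal contains a genuine gap at exactly the point you flag as ``the hard part''. You correctly derive the inequality
\[
k_L(n_L-b)\;\leq\; g_4(K)-\frac{e(\gamma)-b+1}{2},
\]
but this alone cannot yield a contradiction: the right-hand side is the slice--Bennequin defect of the particular braid $\gamma$, and for an arbitrary knot $K$ nothing prevents this defect from being large. Your proposed plan to close the gap---using the first half of Proposition~\ref{prop:qpbraidindexdetection} to force $\Upsilon_K(t)=-t\,g_4(K)$ on $[0,2/m_K]$ and then invoking ``consistency'' of the piecewise-linear structure---does not work. You have already shown $m_K\geq n_L>b$, so $2/m_K\leq 2/n_L<2/b$; the linearity you obtain is confined to an interval strictly to the left of $t=2/b$ and imposes no constraint there. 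Nothing in that picture rules out a large defect.

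The missing idea, which is exactly what the paper's proof of Theorem~\ref{thm:concbraidindex} supplies, is to bound the defect from the \emph{other} side by applying a slice--Bennequin inequality to the \emph{mirror} braid. Concretely, Livingston's slice--Bennequin inequality for $\tau$ applied to $m\gamma$ gives
\[
\tau(K)\;\leq\;\frac{e(\gamma)+b-1}{2},\qquad\text{hence}\qquad \frac{e(\gamma)-b+1}{2}\;\geq\;\tau(K)-(b-1).
\]
Substituting this into your key inequality with $b=n_L-1$ eliminates $e(\gamma)$ entirely and yields
\[
k_L\;\leq\;\bigl(g_4(L)-\tau(L)\bigr)+(n_L-2),
\]
which contradicts $k_L\geq n_L-1$ once $\tau(L)=g_4(L)$ (as is the case in Theorem~\ref{thm:concbraidindex}). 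This two-sided use of slice--Bennequin---once for $\Upsilon$ on $\gamma$, once for $\tau$ on $m\gamma$---is the crux of the argument, and it is absent from your plan. As smaller points: your cobordism construction via crossing changes and Garside theory merely reproves the slice--Bennequin inequality~\eqref{eq:Bennequin-type}, which the paper invokes directly; and the detour through showing $b(K)\geq m_K$ is not needed---the paper compares the two bounds on $\Upsilon$ at $t=2/(n_L-1)$ without ever mentioning $m_K$.
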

First note that Equation \eqref{eq:upsilonnotslice}, to which we will refer in the proof below, only gives a nontrivial restriction if $n_L>2$, since $\Upsilon$ is only defined on $[0,2]$. The cases $n_L=1,2$ require separate attention. If $n_L=1$, the statement of the Theorem is vacuous. If $n_L=2$, then $g_4(L) \geq g_4(T_{2,2k+1}) = k >0$. Hence $L$ is not slice, so not concordant to any 1-braid.
\begin{proof}[Proof of Theorem~\ref{thm:concbraidindex}]
Having dispensed with the cases $n=1,2$, we turn to the general case where $n>2$. Assume towards a contradiction that there is an $(n-1)$-braid $\gamma$ whose closure $\widehat{\gamma}$ is concordant to $L$. By applying 
the slice-Bennequin 
inequality~\eqref{eq:Bennequin-type} to $\gamma$, we have
\begin{equation}\label{eq:sbineq}
 -\frac{\Upsilon_{\widehat{\gamma}}(t)}{t}
 \geq \frac{\ell(\gamma) - ((n-1)-1)}{2}
 \end{equation} for $t\leq \frac{2}{n-1}$.

Applying the slice-Bennequin 
inequality for $\tau$ established by Livingston~\cite[Corollary~11]{Livingston_Comp}, i.e.
\[\tau(\widehat{\beta})\geq\frac{\l(\beta)-(n-1-1)}{2}\quad\text{for all } (n-1)\text{-braids},\]  to $m\gamma$---the mirror image of $\gamma$---yields
 \[\tau(\widehat{m\gamma})\geq \frac{\ell(m\gamma) - ((n-1)-1)}{2}.\]
 Using
$\tau(\widehat{\gamma})=-\tau(\widehat{m\gamma})$ and $\ell(m\gamma) =-\ell(\gamma)$ this yields
\[ \tau(\widehat{\gamma})\leq \frac{\ell(\gamma)+(n-2)}{2}.\] Combined with \eqref{eq:sbineq}, this gives \[ -\frac{\Upsilon_{\widehat{\gamma}}(t)}{t} \geq \tau(\widehat{\gamma})-(n-2),\] for $t\leq \frac{2}{n-1}$. In particular, at $t=\frac{2}{n-1}$, we get \begin{equation}\label{eq:prop19b}
-\Upsilon_{\widehat{\gamma}}\left( \frac{2}{n-1}\right) \geq \frac{2}{n-1}\tau(\widehat{\gamma})-2\frac{n-2}{n-1}.
\end{equation} On the other hand, for $L$, Proposition \ref{prop:qpbraidindexdetection} tells us that \begin{equation}\label{eq:prop19a}
-\Upsilon_L\left( \frac{2}{n-1}\right) \leq \frac{2}{n-1}\tau(L) -nk\left( \frac{2}{n-1}-\frac{2}{n}\right),
\end{equation} since $\frac{2}{n-1}>\frac{2}{n}$. Since $\tau$ and $\Upsilon$ are concordance invariants, combining \eqref{eq:prop19b} and \eqref{eq:prop19a} gives \[ 2\frac{n-2}{n-1} \geq \frac{2k}{n-1},\] contradicting the fact that $\beta$ has $k>n-2$ twists.
\end{proof}

\section{Homogenization and comparison between $\Ut$ and the Levine-Tristram signature $\so$.}\label{sec:homogenization}
Given a link invariant $\tau$ one can define a braid invariant (also denoted by $\tau$) by setting \[\tau(\beta)=\tau\left(\widehat{\beta}\right),\]
where $\widehat{\beta}$ denotes the (standard) closure of a braid $\beta$.

Fix a positive integer $n$ and study the braid group $B_n$ on $n$-strands. Assume $\tau$ takes values in $\R$ and that $\tau\colon B_n\to\R$ is a quasi-morphism; that is, 
there exists a positive real $d$, called the \emph{defect}, such that
\[|\tau(\alpha\beta)-\tau(\alpha)-\tau(\beta)|\leq d\] for all \(\alpha, \beta \) in \(B_n.\)
Then the homogenized invariant
\[
\widetilde{\tau}\colon B_n\to \R,\quad \beta\mapsto \lim_{k\to \infty}\frac{\tau(\beta^k)}{k}\]
is well-defined.
For example, Gambaudo and Ghys~\cite{GambaudoGhys_BraidsSignatures} studied the homogenization $\So$ of the Levine-Tristram signatures $\so$, introduced by Levine and Tristram~\cite{levine,tristram} for unit complex numbers $\omega$ as a generalization of Trotter's classical signature $\s=\s_{-1}$~\cite{Trotter_62_HomologywithApptoKnotTheory}.

In this section,
we compare the homogenizations of $\Upsilon(t)$ and $\sigma_{e^{\pi i t}}$. 
We show that for the standard braids representing torus links one has that the homogenization of $\Upsilon(t)$ equals $\frac{\widetilde{\sigma_{e^{\pi i t}}}}{2}$ and that for 3-braids the homogenization of $\Upsilon(1)$ equals $\frac{\widetilde{\sigma_{e^{\pi i}}}}{2}=\frac{\widetilde{\sigma}}{2}$
The latter leads to examples of positive $3$-braids that have non-convex $\Upsilon$.

\subsection{Definition and general properties of homogenizations}
We start with defining the homogenization $\Upsilon$ and recalling some elementary properties. Since $\Upsilon$ is a knot invariant not defined on links, some extra care is in order when defining its homogenization.
We set the homogenization of $\Upsilon$ to be \begin{equation}\label{eq:defHomUpsilon}\widetilde{\Upsilon(t)}(\beta)=\lim_{k\to\infty}\frac{\Upsilon_{\widehat{\beta^{k(n!)}a_1a_2\cdots a_{n-2}a_{n-1}}}(t)}{(n!)k}.\end{equation}
Morally, this is the homogenization
\[
\lim_{k\to\infty}\frac{\Upsilon_{\widehat{\beta^{k}}}(t)}{k};\]
however, because the link $\widehat{\beta^{k}}$ might have more than one component 
 and $\Upsilon(t)$ is only defined for knots, the former definition is the sensible one. Indeed, by taking powers that are multiples of $n!$ we guarantee that $\beta^{k(n!)}$ is a pure braid and, thus, the closure of $\beta^{k(n!)}a_1a_2\cdots a_{n-2}a_{n-1}$ is a knot.
 Below, when we write $\lim_{k\to\infty}$, we will always mean that the limit is taken over $k$ that are a multiples of $n!$. This is further justified by the third item in the next remark.
 \begin{Remark}\label{rem}
 The homogenization of $\Upsilon$ behaves well, by general principles that hold for homogenizations coming from knot invariants with the property that their absolute value is a lower bound for slice genus; compare with Brandenbursky's work~\cite{Brandenbursky_11}. In particular, we have the following.
 \begin{itemize}
\item The homogenization $\HUt$ is well-defined; i.e.~the limit exists.

\item The choice of the braid $(a_1a_2\cdots a_{n-2}a_{n-1})$ is not relevant; i.e.~any other braid that closes to a knot yields the same homogenization.

\item Rather than letting the limit run over multiples of $n!$, we could take the limit over any other sequence of integers $(b_k)_{k\in\N}$ going to $\infty$ for which the closures of the $\beta^{b_k}$ are pure braids.

\item If $\widehat{\beta}$ is a knot, then $\HUt(\beta)-\Upsilon_{\widehat{\beta}}(t)\leq t\frac{n-1}{2}$.

\item The homogenization $\HUt\colon B_n\to \R$ is a quasi-morphism with defect $d=t({n-1})$.
\end{itemize}\end{Remark}

\subsection{The homogenization of $\Upsilon$ for small $t$ and standard torus braids}
In this subsection, we determine $\HUt\colon B_n\to \R$ for $t\leq\frac{2}{n}$ and we calculate $\HUt$ on the standard torus braids $(a_1a_2\cdots a_{n-1})^m\in B_n$. These are immediate consequences of our $\Upsilon$ calculations above.

As a corollary of Proposition~\ref{prop:qpbraidindexdetection} we have:
\begin{corollary}\label{cor:u=sfortsmall}Let $\beta$ be an $n$-braid of algebraic length $\l(\beta)$. 
One has
$\HUt(\beta)=-t\frac{\l(\beta)}{2}$ for $t\leq\frac{2}{n}$.
In other words, $\HUt(\beta)=\frac{\widetilde{\sigma_{e^{\pi i t}}}(\beta)}{2}$ for $t\leq\frac{2}{n}$.
\end{corollary}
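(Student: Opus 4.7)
The plan is to derive the first equality $\HUt(\beta)=-t\l(\beta)/2$ for $t\leq 2/n$ by a two-sided application of the slice--Bennequin inequality \eqref{eq:Bennequin-type}. The second equality with $\widetilde{\sigma_{e^{\pi i t}}}(\beta)/2$ will then follow because the analogous identity $\widetilde{\sigma_{e^{\pi i t}}}(\beta)=-t\l(\beta)$ holds classically on $B_n$ for $t\leq 2/n$—a consequence of the Gambaudo--Ghys analysis \cite{GambaudoGhys_BraidsSignatures} of the signature quasi-morphism, and in fact provable by the very same two-step scheme that we use for $\Upsilon$.

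For the upper bound on $\HUt(\beta)$, I would apply \eqref{eq:Bennequin-type} directly to the $n$-braid $\beta^{k(n!)}a_1a_2\cdots a_{n-1}$, which closes to a knot by the choice made in \eqref{eq:defHomUpsilon} and has algebraic length $k(n!)\l(\beta)+(n-1)$. Since \eqref{eq:Bennequin-type} is valid for every $n$-braid at every $t\leq 2/n$, this yields
\[-\Upsilon_{\widehat{\beta^{k(n!)}a_1\cdots a_{n-1}}}(t)\geq t\cdot\frac{k(n!)\l(\beta)+(n-1)-n+1}{2}=\frac{tk(n!)\l(\beta)}{2};\]
dividing by $k(n!)$ and letting $k\to\infty$ gives $\HUt(\beta)\leq -t\l(\beta)/2$.

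For the matching lower bound, I would run the same estimate on $\beta^{-1}\in B_n$, which has $\l(\beta^{-1})=-\l(\beta)$, obtaining $\HUt(\beta^{-1})\leq t\l(\beta)/2$. Because $\HUt\colon B_n\to \R$ is a homogeneous quasi-morphism (Remark~\ref{rem}), it is antisymmetric under inversion, $\HUt(\beta^{-1})=-\HUt(\beta)$, and so $\HUt(\beta)\geq -t\l(\beta)/2$, completing the first equality. The very same scheme, fed into the classical Bennequin-type lower bound for $\sigma_{e^{\pi i t}}$ on $n$-braids, yields the companion identity for $\widetilde{\sigma_{e^{\pi i t}}}$ and hence the second equality. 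The only non-routine inputs are \eqref{eq:Bennequin-type} itself---valid for \emph{all} $n$-braids, not just the quasi-positive ones, as noted in the paragraph preceding the corollary---and the homogeneous quasi-morphism structure of $\HUt$ from Remark~\ref{rem}; granted those, I do not expect any genuine obstacle, the proof reducing to essentially one line per direction.
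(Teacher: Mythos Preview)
Your argument is correct and takes a genuinely different route from the paper's own proof.

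The paper argues in two stages: first, for positive (or quasi-positive) $\beta$, it invokes Lemma~\ref{lem:m_Kforquasipos} to get $m_{\widehat{\beta^k a_1\cdots a_{n-1}}}\leq n$ and then applies the \emph{equality} in Proposition~\ref{prop:qpbraidindexdetection} to compute the limit exactly; second, for arbitrary $\beta$, it writes $\beta=\alpha(\Delta^2)^{-l}$ with $\alpha$ positive, uses that $\Delta^2$ is central so the powers split, and reduces to the positive case via the quasi-morphism bound. Your approach instead uses only the \emph{inequality}~\eqref{eq:Bennequin-type} (itself a corollary of Proposition~\ref{prop:qpbraidindexdetection}), applied once to $\beta$ and once to $\beta^{-1}$, and closes the gap with the antisymmetry $\HUt(\beta^{-1})=-\HUt(\beta)$ of a homogeneous quasi-morphism. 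This is shorter and avoids both the case split and the appeal to the center of $B_n$; the paper's approach, on the other hand, gives slightly more, since along the way it pins down the actual value of $\Upsilon$ (not just its homogenization) on the quasi-positive closures.

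Two small points worth tightening. First, Remark~\ref{rem} records that $\HUt$ is a quasi-morphism but does not state homogeneity explicitly; you are implicitly using that any limit of the form~\eqref{eq:defHomUpsilon} is homogeneous in $\beta$ (immediate from the third bullet of Remark~\ref{rem}) and that homogeneous quasi-morphisms are antisymmetric under inversion (a standard one-line fact). Second, your reference to ``the paragraph preceding the corollary'' for~\eqref{eq:Bennequin-type} is slightly off: the inequality and the remark that it holds for \emph{all} $m$-braids appear earlier, just after Proposition~\ref{prop:qpbraidindexdetection}.
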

\begin{proof}
We first consider the case where $\beta$ is a positive (or quasi-positive) braid. Thus, by Lemma~\ref{lem:m_Kforquasipos}, we have $n\geq m_{\widehat{\beta^ka_1\cdots a_{n-1}}}$ for all positive integers $k$ such that the closure of $\beta^ka_1\cdots a_{n-1}$ is a knot. For all $t\leq\frac{2}{n}$, we therefore find
\begin{align*}\HUt(\beta)&=\lim_{k\to\infty}\frac{\Upsilon_{\widehat{\beta^ka_1\cdots a_{n-1}}}(t)}{k}
\\&=\lim_{k\to\infty}\frac{-tg_4\left(\widehat{\beta^ka_1\cdots a_{n-1}}\right)}{k}
\\&=\lim_{k\to\infty}\frac{-tl(\beta^k)}{2k}=\frac{-tl(\beta)}{2},\end{align*}
where Proposition~\ref{prop:qpbraidindexdetection} is used in the second equality.

Otherwise,
write $\beta=\alpha(\Delta^{2})^{-l}$, where $l$ is a positive integer, $\alpha$ is a positive $n$-braid, and $\Delta^{2}$ denotes the positive full-twist on $n$-strands.
Since $\Delta^{2}$ commutes with every other $n$-braid, we have $\beta^k=\alpha^k(\Delta^{2})^{-kl}$ for any positive integer $k$ and so
\begin{equation}\label{eq}\left|\HUt(\beta^k)-\HUt(\alpha^k)-\HUt((\Delta^{2})^{-kl})\right|\overset{\text{Remark~\ref{rem}}}{\leq} t(n-1)\for\all k\in\Z.\end{equation}
Using~\eqref{eq} and the above calculation for positive braids, we find
\begin{align*}\HUt(\beta)&=
\lim_{k\to\infty}\frac{\HUt(\beta^k)}{k}
\\&\overset{\eqref{eq}}=
\lim_{k\to\infty}\frac{\HUt(\alpha^k)+\HUt((\Delta^{2})^{-kl})}{k}\\&=
\lim_{k\to\infty}\frac{\HUt(\alpha^k)-\HUt((\Delta^{2})^{kl})}{k}
\\&=
\lim_{k\to\infty}\frac{\HUt(\alpha^k)}{k}-\lim_{k\to\infty}\frac{\HUt((\Delta^{2})^{kl})}{k}\\
&
=\HUt(\alpha)-\HUt((\Delta^{2})^{l})
\\&
=\frac{-tl(\alpha)}{2}-\frac{-tl((\Delta^{2})^{l})}{2}
\\&=-\frac{tl(\alpha(\Delta^{2})^{-l})}{2}=-\frac{tl(\beta)}{2}
.\end{align*}

The equality
$\HUt(\beta)=\frac{\widetilde{\sigma_{e^{\pi i t}}}(\beta)}{2}$ follows, since
$\widetilde{\sigma_{e^{\pi i t}}}(\beta)=-t{\l(\beta)}$  for $t\leq\frac{2}{n}$, as mentioned in~\cite{GambaudoGhys_BraidsSignatures}.
\end{proof}

In case of the standard torus link braids, more can be said.
As a consequence of Corollary~\ref{cor:upsfortnnk+1} and the formula
\begin{equation}\label{eq:homSigfortorusbraids}\widetilde{\sigma_{e^{\pi i t}}}((a_1a_2\cdots a_{n-1})^m)=\frac{2m}{n}\left(-(i+1)i-\frac{1}{2}n(n-1-2i)t\right)\for t\in[\frac{2i}{n},\frac{2i+2}{n}],\end{equation}
given in~\cite[Proposition~5.2]{GambaudoGhys_BraidsSignatures}, one finds:
\begin{corollary}\label{cor:u=sfortoruslinks}
For all $t\in[0,2]$, integers $m$, and positive integers $n$, one has $\HUt=\frac{\widetilde{\sigma_{e^{\pi i t}}}}{2}$ on 
the $n$-braid $(a_1a_2\cdots a_{n-2}a_{n-1})^m$. 
\end{corollary}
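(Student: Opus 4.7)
\medskip

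The plan is to unwind the definition of $\HUt$ applied to $\beta = (a_1a_2\cdots a_{n-1})^m$ and recognize the relevant link closures as torus knots of the form $T_{n,nk'+1}$, so that Corollary~\ref{cor:upsfortnnk+1} applies on the nose.

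Concretely, I would first observe that, since the completing braid in~\eqref{eq:defHomUpsilon} is precisely $a_1a_2\cdots a_{n-1}$, we have
\[
\beta^{kn!}\cdot(a_1a_2\cdots a_{n-1}) \;=\; (a_1a_2\cdots a_{n-1})^{mkn!+1}.
\]
Because $n\mid n!$, one may write $mkn!+1 = nk'+1$ with $k' = mk\,(n-1)!$, so the closure of $\beta^{kn!}\cdot(a_1a_2\cdots a_{n-1})$ is the torus knot $T_{n,nk'+1}$. This is the only slightly nontrivial bookkeeping step; after it, everything becomes a direct substitution.

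Next I would apply Corollary~\ref{cor:upsfortnnk+1}: for $t\in[\tfrac{2i}{n},\tfrac{2i+2}{n}]$,
\[
\Upsilon_{T_{n,nk'+1}}(t) \;=\; k'\!\left(-(i+1)i - \tfrac{1}{2}n(n-1-2i)t\right).
\]
Dividing by $kn!$ and sending $k\to\infty$, the factor $k'/(kn!) = m/n$ is constant in $k$, so
\[
\HUt(\beta) \;=\; \tfrac{m}{n}\!\left(-(i+1)i - \tfrac{1}{2}n(n-1-2i)t\right).
\]
Comparing with~\eqref{eq:homSigfortorusbraids} from Gambaudo--Ghys,
\[
\widetilde{\sigma_{e^{\pi it}}}\!\left((a_1\cdots a_{n-1})^m\right) \;=\; \tfrac{2m}{n}\!\left(-(i+1)i - \tfrac{1}{2}n(n-1-2i)t\right),
\]
one reads off the equality $\HUt = \tfrac{1}{2}\widetilde{\sigma_{e^{\pi it}}}$ on $(a_1\cdots a_{n-1})^m$ for every $t\in[0,1]$; the extension to $t\in[0,2]$ is immediate by the symmetry $\Upsilon_K(2-t)=\Upsilon_K(t)$ (and the analogous symmetry $\sigma_{\bar\omega}=\sigma_\omega$ used in the Gambaudo--Ghys formula).

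The case of negative $m$ poses no real obstacle: since $\Delta^2=(a_1\cdots a_{n-1})^n$ is central, the same trick used in the proof of Corollary~\ref{cor:u=sfortsmall} lets one write $(a_1\cdots a_{n-1})^m$ as a positive torus braid times a negative power of $\Delta^2$, and the quasi-morphism property of both $\HUt$ and $\widetilde{\sigma_{e^{\pi it}}}$ (Remark~\ref{rem}) turns the additive decomposition of homogenizations into the desired identity. The main (and only) potential pitfall is the indexing arithmetic in the first step---making sure the $k$-th completed braid really has closure $T_{n,nk'+1}$ rather than a different torus link---but once this is checked, the proof is essentially a two-line computation.
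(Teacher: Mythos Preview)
Your argument is correct, and it is in fact more direct than the paper's. The key observation you exploit---that the completing braid $a_1\cdots a_{n-1}$ in definition~\eqref{eq:defHomUpsilon} combines with $\beta^{kn!}$ to give \emph{exactly} $(a_1\cdots a_{n-1})^{mkn!+1}$, whose closure is the torus knot $T_{n,nk'+1}$ with $k'=mk(n-1)!$---lets you apply Corollary~\ref{cor:upsfortnnk+1} in a single step and read off $\HUt(\beta)=\tfrac{m}{n}\Upsilon_{T_{n,n+1}}(t)$ without any further estimates.

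The paper instead argues via the quasi-morphism property (Remark~\ref{rem}). It writes $((a_1\cdots a_{n-1})^m)^k$ as a full-twist part $(a_1\cdots a_{n-1})^{n\lfloor mk/n\rfloor}$ times a bounded remainder, uses the defect bound~\eqref{eq2} to discard the remainder, and then evaluates the full-twist part by a second (inner) limit. This double-limit route is more robust---it does not rely on the fortunate coincidence that the completing braid is $(a_1\cdots a_{n-1})$, and it treats negative $m$ uniformly via the floor function---but it is also noticeably longer. Your approach trades that uniformity for a cleaner computation; the separate treatment you sketch for $m<0$ is fine (and could be shortened further by noting that any homogenized quasi-morphism satisfies $\widetilde{\tau}(\beta^m)=m\,\widetilde{\tau}(\beta)$, so the $m=1$ case already implies all $m\in\Z$).
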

\begin{proof}For all integers $k$, we write
\[((a_1a_2\cdots a_{n-2}a_{n-1})^m)^k=(a_1a_2\cdots a_{n-2}a_{n-1})^{n\lfloor\frac{mk}{n}\rfloor}(a_1a_2\cdots a_{n-2}a_{n-1})^{mk-n\lfloor\frac{mk}{n}\rfloor}.\] By Remark~\ref{rem} and the fact that $mk-n\lfloor\frac{mk}{n}\rfloor\leq n$, this yields

\begin{equation}\label{eq2}\left|\HUt(((a_1a_2\cdots a_{n-2}a_{n-1})^m)^k)-\HUt((a_1a_2\cdots a_{n-2}a_{n-1})^{n\lfloor\frac{mk}{n}\rfloor})
\right|
{\leq} C(n), 
\end{equation}
where $C(n)$ is a constant only depending on $n$.
We calculate
\begin{align*}\HUt((a_1a_2\cdots a_{n-2}a_{n-1})^m)&=
\lim_{k\to\infty}\frac{\HUt(((a_1a_2\cdots a_{n-2}a_{n-1})^m)^k)}{k}\\
&=\lim_{k\to\infty}\frac{\HUt((a_1a_2\cdots a_{n-2}a_{n-1})^{n\lfloor\frac{mk}{n}\rfloor})}{k}\\
&=\lim_{k\to\infty}
\frac{\lim_{k'\to\infty}\frac{\Upsilon_{T_{n,k'n\lfloor\frac{mk}{n}\rfloor+1}}(t)}{k'}}{k}\\
&=\lim_{k\to\infty}
\frac{\lim_{k'\to\infty}\frac{k'\lfloor\frac{mk}{n}\rfloor\Upsilon_{T_{n,n+1}}(t)}{k'}}{k}\\
&=\lim_{k\to\infty}
\frac{\lfloor\frac{mk}{n}\rfloor\Upsilon_{T_{n,n+1}}(t)}{k}\\
&=\frac{m}{n}\Upsilon_{T_{n,n+1}}(t)\\
&=\frac{m}{n}\left(-(i+1)i-\frac{1}{2}n(n-1-2i)t\right)\\
&=\frac{\widetilde{\sigma_{e^{\pi i t}}}((a_1a_2\cdots a_{n-2}a_{n-1})^m)}{2},
\end{align*}
where~\eqref{eq2}, Proposition~\ref{prop:upsalltorus}, Proposition~\ref{prop:OSStorus}, and~\eqref{eq:homSigfortorusbraids} are used in the second, fourth, second to last, and last equality, respectively.
\end{proof}
\subsection{Comparison between $\Upsilon(1)$ and $\frac{\sigma}{2}$}
The study of the difference of $\upsilon=\Upsilon(1)$ and $\frac{\sigma}{2}=\frac{\s_{-1}}{2}$ is of particular interest, since this difference provides a lower bound for the smooth four-dimensional
crosscap number~\cite[Theorem~1.2]{OSS_2015}. 
\begin{prop}\label{prop:sig=ups}
We have that $\widetilde{\u}=\widetilde{\frac{\sigma}{2}}$ for all $3$-braids. In particular, \[\left|\Upsilon_K(1)-\frac{\sigma(K)}{2}\right|\leq 2\] for all knots $K$ that arise as closures of $3$-braids.
\end{prop}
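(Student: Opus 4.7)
The plan is to first prove the equality of homogenized quasi-morphisms $\widetilde{\u}=\widetilde{\frac{\s}{2}}$ on all of $B_3$, and then to deduce the pointwise bound on 3-braid closures from it by combining it with the defect estimates of Remark~\ref{rem} and its signature analogue.

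For the equality, I would use that both $\widetilde{\u}$ and $\widetilde{\frac{\s}{2}}$ are homogeneous quasi-morphisms on $B_3$, hence conjugation-invariant and additive on commuting pairs. By a standard classification of 3-braids (Garside normal form, or the Birman--Menasco--Murasugi classification of 3-braid conjugacy classes), every 3-braid is conjugate to one of the form $\Delta^{2e}p$, with $e\in\Z$ and $p$ a positive 3-braid. Since $\Delta^2$ generates the center of $B_3$, both quasi-morphisms split as
\[\widetilde{\tau}(\Delta^{2e}p)=e\widetilde{\tau}(\Delta^2)+\widetilde{\tau}(p).\]
Corollary~\ref{cor:u=sfortoruslinks} applied at $n=3$, $m=3$ yields $\widetilde{\u}(\Delta^2)=\widetilde{\frac{\s}{2}}(\Delta^2)$, so the task reduces to verifying the equality on positive 3-braids $p$.

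For positive 3-braids, I would combine Murasugi's classical formula for the signature with a computation of $\Upsilon(1)$ for positive 3-braid closures. When the closure is an $L$-space knot, $\Upsilon$ is explicitly determined by the Alexander polynomial via~\cite{OSS_2014}. For positive 3-braid closures that are not $L$-space knots---including the family $K_n=\widehat{(a_1^2a_2^2)^n(a_1a_2)}$ from Example~\ref{Ex:a1a1a2a2}---a more direct Floer-homological or recursive computation is required. Since only the asymptotic quantity $\widetilde{\u}(p)=\lim_k\Upsilon_{\widehat{p^{6k}a_1a_2}}(1)/(6k)$ matters, it suffices to understand the leading-order behaviour under self-concatenation of $p$, which should match the (explicit) leading-order behaviour of Murasugi's signature formula on $\widehat{p^{6k}a_1a_2}$.

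For the bound, given the equality of homogenizations, the triangle inequality gives
\[\left|\Upsilon_K(1)-\frac{\s(K)}{2}\right|\leq\left|\Upsilon_{\widehat{\beta}}(1)-\HUt(\beta)\right|_{t=1}+\left|\widetilde{\frac{\s}{2}}(\beta)-\frac{\s(\widehat{\beta})}{2}\right|,\]
and each of the two terms on the right is bounded by $\frac{n-1}{2}=1$: the first directly by Remark~\ref{rem}, and the second by the completely analogous estimate using the genus-$(n-1)$ cobordism between $\widehat{\alpha\gamma}$ and $\widehat{\alpha}\sharp\widehat{\gamma}$, together with Viro's bound $|\Delta\s|\leq 2g$ for cobordism of genus $g$. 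This yields the bound~$2$. The main obstacle in the whole proof is the first step for positive 3-braids whose closures are not $L$-space knots, since there the Alexander-polynomial shortcut for $\Upsilon(1)$ is unavailable and one must control the homogenization by another route.
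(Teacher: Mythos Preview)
Your reduction via the center of $B_3$ is sound and mirrors the paper's first move (writing $\beta^na_1a_2$ as $(a_1a_2)^{3k}\alpha$ with $\alpha$ positive and $k$ maximal), and your derivation of the bound $2$ from the equality of homogenizations via Remark~\ref{rem} and its signature analogue is essentially the paper's argument for the ``In particular'' clause.

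The genuine gap is exactly where you flag it: establishing $\widetilde{\u}=\widetilde{\frac{\s}{2}}$ on positive $3$-braids. Saying the leading-order behaviour of $\Upsilon(1)$ under self-concatenation ``should match'' Murasugi's signature formula is not a proof, and the Alexander-polynomial route is blocked precisely because many positive $3$-braid closures (e.g.~the $K_n$ of Example~\ref{Ex:a1a1a2a2}) are not $L$-space knots. No direct Floer computation or recursion for $\Upsilon(1)$ of an arbitrary positive $3$-braid closure is known, so this step cannot be completed as you outline it.

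The paper avoids any direct computation of $\Upsilon$ on such braids. Instead it shows that for every $3$-braid closure $K$ there is a cobordism of \emph{uniformly bounded genus} (independent of the power $n$) from $K$ to a connected sum of torus knots of braid index at most $3$. Concretely: after extracting the central part one gets $T_{3,3k+1}\sharp K'$ with $K'=\widehat{a_1^{m_1}a_2^{m_2}\cdots a_2^{m_{2l}}}$, $m_i\geq 2$; then one bounds $\u(K')$ on both sides via two further bounded-genus cobordisms, one to $K_l=\widehat{(a_1^2a_2^2)^la_1a_2}$ (using Example~\ref{Ex:a1a1a2a2}) and one to a connected sum of $T_{2,*}$'s. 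Since $\u$ and $\frac{\s}{2}$ agree up to a fixed constant on torus knots of braid index $\leq 3$, and both change by at most the cobordism genus, this yields a universal bound $|\u(K)-\frac{\s(K)}{2}|\leq d$, hence equality of the homogenizations. This cobordism-to-torus-knots idea is the missing ingredient in your proposal.
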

The second part of Proposition~\ref{prop:sig=ups} follows from the first part and Remark~\ref{rem}. It is worth noting that for $t\neq1$, the analog of Proposition~\ref{prop:sig=ups} is false; see Example~\ref{Ex:a^3b^7} below. Proposition~\ref{prop:sig=ups} leads
to examples of knots that arise as closures of positive $3$-braids for which $\Upsilon$ is non-convex. This answers a question of Borodzik and Hedden in the negative~\cite[Question~1.5]{BorodzikHedden}. We provide these examples in detail before proving Proposition~\ref{prop:sig=ups}.

In Example~\ref{Ex:a1a1a2a2} and the proof of Proposition~\ref{prop:sig=ups}, we will need the values of $\Upsilon$ of torus knots of braid index at most 3. 
Recall, that for torus knots of braid index 2, one has $\Upsilon(t)=-\tau(t)$ for all $t\leq 1$ (for example by Proposition~\ref{prop:OSStorus} or by the fact that $\Upsilon$ is linear on $[0,1]$ for alternating knots~\cite[Theorem~1.14]{OSS_2014}). For torus knots of braid index 3, $\Upsilon$ is given as follows.
For all positive integers $n$, we have
\begin{equation}\label{eq:upsilonforindex3torusknots}
\upsilon(T_{3,3n+1})=\upsilon(T_{3,3n+2})+1=-2n \et \upsilon(T_{3,-3n-1})=\upsilon(T_{3,-3n-2})-1=2n
.\end{equation} Equation~\eqref{eq:upsilonforindex3torusknots} can be calculated using~\cite[Theorem~15]{OSS_2014}. See for example~\cite[Proposition~28]{Feller_15_MinCobBetweenTorusknots} or use Proposition~\ref{prop:upsalltorus}.
This determines $\Upsilon$ for torus knots of braid index 3, since
\[\Upsilon_{T_{3,p}}(t)=-\tau(T_{3,p})t\quad\text{for }t\leq\frac{2}{3}\]
and $\Upsilon_{T_{3,p}}(t)$ is linear on $[\frac{2}{3},1]$.

\begin{Example}\label{Ex:a1a1a2a2}
Let $\beta_n$ be the 3-braid $(a_1^2a_2^2)^n$. By Corollary~\ref{cor:u=sfortsmall}, we have
\[\HUt(\beta_n)=-2tn\quad\text{for } 0\leq t\leq \frac{2}{3}.\]
The asymptotic signature $\widetilde{\frac{\sigma}{2}}(\beta_n)$ is $-n$; see e.g.~\cite{Stoimenow_08,Feller_15_ASharpSignatureBoundForPositiveFour-Braids}.
Therefore, we have that $\widetilde{\u}$ is $-n$ by Proposition~\ref{prop:sig=ups}. In particular, $\HUt(\beta_n)$ is not convex as a function of $t$, since \[\widetilde{\Upsilon(\tfrac{4}{3})}(\beta_n)=\widetilde{\Upsilon(\tfrac{2}{3})}(\beta_n)=-2n\frac{2}{3}<-n=\widetilde{\Upsilon(1)}(\beta_n).\]
As a consequence, the knots $K_{n}$ obtained as the closure of $\beta_n(a_1a_2)$, where $n$ is a positive integer, have non-convex $\Upsilon$ for large enough $n$.
We provide the calculation that establishes the latter statement without reference to Proposition~\ref{prop:sig=ups},
since in the proof of Proposition~\ref{prop:sig=ups} we use part of this calculation.
In fact, we prove that $\U_{K_{n}}$ is not convex for $n\geq 6$:

We start by observing that there is a genus 1 cobordism between $\widehat{\beta_n(a_1a_2)}\sharp T_{2,2n+1}$  and $T_{3,3n+1}$ or, in other words, $g_4(K_n\sharp T_{2,2n+1}\sharp m({T_{3,3n+1}}))\leq 1$. For this, we note that $\beta_n(a_1a_2)a_2^{2n}$ equals $(a_1a_2)^{3n+1}$ as 3-braids. Thus,
$T_{3,3n+1}$ is the closure of $\beta_n(a_1a_2)a_2^{2n}$. A genus 1 cobordism from $\widehat{\beta_n(a_1a_2)}\sharp T_{2,2n+1}$ to $\widehat{\beta_n(a_1a_2)a_2^{2n}}=T_{3,3n+1}$ is indicated in Figure~\ref{fig:genus1cob}.
\begin{figure}[h]
\centering
\def\svgscale{1.8}
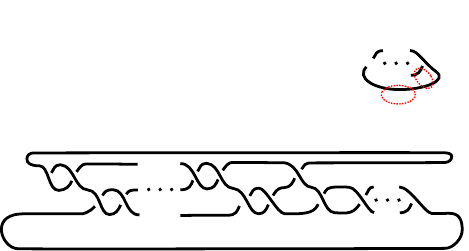
\caption{The knot $\widehat{\beta_n(a_1a_2)}\sharp T_{2,2n+1}$ (top) with $2$ spheres (red) indicating how $2$ saddle moves, which correspond to a genus 1 cobordism, yield $\widehat{\beta_n(a_1a_2)a_2^{2n}}=T_{3,3n+1}$ (bottom).
}
\label{fig:genus1cob}
\end{figure}

Therefore, we have
\[\left|\Upsilon_{T_{3,3n+1}}(t)-\Upsilon_{T_{2,2n+1}\sharp K_{n}}(t)\right|=\left|\Upsilon_{K_n\sharp T_{2,2n+1}\sharp m({T_{3,3n+1}})}(t)\right|\leq 1-|1-t|\]
since $\left|\frac{\Upsilon}{1-|1-t|}\right|$ is a lower bound for the slice genus. We rewrite this as
\begin{align}
\label{eq:lowerboundUps(Kn)}\Upsilon_{T_{3,3n+1}}(t)-\Upsilon_{T_{2,2n+1}}(t) - (1-|1-t|)
&\leq\Upsilon_{K_{n}}(t)\et\\
\label{eq:upperboundUps(Kn)}\Upsilon_{T_{3,3n+1}}(t)-\Upsilon_{T_{2,2n+1}}(t) + ( 1-|1-t|)&\geq \Upsilon_{K_{n}}(t).\end{align}
Non-convexity follows since at $t=\frac{2}{3},\frac{4}{3}$ we have \begin{align*}\Upsilon_{T_{3,3n+1}}(t)-\Upsilon_{T_{2,2n+1}}(t)&=-2n+\frac{2n}{3}
=\frac{-6n+2n}{3}=-\frac{4n}{3},\end{align*}
while at $t=1$, we find \[\U_{T_{3,3n+1}}(t)-\Upsilon_{T_{2,2n+1}}(t)\overset{\text{\eqref{eq:upsilonforindex3torusknots}}}{=}-2n+n=-n.\] Therefore,
\[\Upsilon_{K_{n}}(\frac{2}{3})=\Upsilon_{K_{n}}(\frac{4}{3})\overset{\eqref{eq:upperboundUps(Kn)}}{\leq} -\frac{4n}{3}+\frac{2}{3}
<-n-1\overset{\eqref{eq:lowerboundUps(Kn)}}{\leq} \Upsilon_{K_{n}}(1)\]
for all $n>5$.

We remark that the above argument can be used to $\widetilde{\Upsilon(t)}(\beta_n)$ completely. Indeed,
using~\eqref{eq:lowerboundUps(Kn)} and~\eqref{eq:upperboundUps(Kn)} together with the definition of $\widetilde{\Upsilon(t)}$, we have
\[\widetilde{\Upsilon(t)}(\beta_n)=\widetilde{\Upsilon(t)}((a_1a_2)^{3n})-\widetilde{\Upsilon(t)}(a_1^{2n})=\Upsilon_{T_{3,3n+1}}(t)-\Upsilon_{T_{2,2n+1}}(t).\]
Thus,
\[\widetilde{\Upsilon(t)}(\beta_n)=\left\{\begin{array}{c}
-2tn \for t\leq \frac{2}{3}\\
-2n+tn
\for \frac{2}{3}\leq t\leq 1
\end{array}\right..\]
\end{Example}

Proposition~\ref{prop:sig=ups},
Corollary~\ref{cor:u=sfortsmall}, and Corollary~\ref{cor:u=sfortoruslinks} might bring one to speculate that $\widetilde{\Upsilon(t)}=\frac{\widetilde{\s_{e^{t\pi i}}}}{2}$ holds for more general families of braids or at least for all $3$-strands braids. 
However, this is false in general
as the following example shows:
\begin{Example}\label{Ex:a^3b^7}
Let $\beta$ be the $3$-braid $a_1^3a_2^7$ and set $\omega=e^{\frac{3}{4}\pi i}$.
A calculation shows that
$|\widetilde{\s_\omega}(\beta)|>\frac{3}{4}\l(\beta)$,
which yields
$|\frac{\widetilde{\s_\omega}(\beta)}{2}|>
|\widetilde{\Upsilon(\frac{3}{4})}(\beta)|$ since
$|\widetilde{\Upsilon(\frac{3}{4})}(\beta)|\leq \frac{3}{8}\l(\beta)$
follows from the fact that $|\Upsilon(\frac{3}{4})|$ is a lower bound for $\frac{3}{4}g_4$.
To be explicit: this can, for example, be established by calculating $\s_{\omega}\left(\widehat{\beta^{16}}\right)=-128$ and then using
\[\left|\s_{\omega}\left(\widehat{\beta^{16k}}\right)\right|\geq k\left|\s_{\omega}\left(\widehat{\beta^{16}}\right)\right|-2(k-1)=128k-2k+2=126k+2>120k=\frac{3}{4}\l(\beta^{k16}),\]
where the first inequality is a consequence of $\s_{\omega}$ being a quasi-morphism of defect 2 on the braid group on 3 strands. As a consequence, we have
\[\left|\frac{\widetilde{\s_\omega}(\beta)}{2}\right|\geq\left|\lim_{k\to\infty}\frac{126k+2}{32k}\right|\geq \frac{3}{8}\l(\beta)+\frac{6}{32}\geq\left|\widetilde{\Upsilon(\tfrac{3}{4})}(\beta)\right|+\frac{3}{16}.\]

We thank Lukas Lewark for sharing his observations 
concerning $\s_{\omega}\left(\widehat{\beta^{k}}\right)$. 

\end{Example}

\subsection{Proof of Proposition~\ref{prop:sig=ups}}
We now turn to the proof of Proposition~\ref{prop:sig=ups}. Here is a brief outline of the strategy:

Let $\beta$ be a 3-braid which has a knot $K$ as its closure.
In a first step, we will show that (up to performing a small cobordism) $K$ can be written as a connect sum of a torus knot and the closure of a positive 3-braid in which all generators occur with powers of squares and higher. In a second step, we will see that calculating $\Upsilon(1)$ and $\frac{\sigma}{2}$ for these special braids can be reduced to calculations for torus knots of braid index 3 or less. Since for torus knots of braid index 3 or less $\Upsilon(1)$ and $\frac{\sigma}{2}$ agree (up to some global constant), we will conclude that there is a constant $d$ such that $|\Upsilon_K(1)-\frac{\sigma}{2}(K)|\leq d$ for all $K$ that are closures of 3-braids. This in turn will imply Proposition~\ref{prop:sig=ups} by the definition of the homogenization as given in~\eqref{eq:defHomUpsilon}.

\begin{proof}[Proof of Proposition~\ref{prop:sig=ups}]
Let $\beta$ be any $3$-braid. By the definition of homogenization for 3-braids, we need to consider $\beta^na_1a_2$ for large $n$. In the entire proof, we fix $n$ as a positive integer that is a multiple of $6$.

We are interested in $\upsilon$ and $\s$ of $K=\widehat{\beta^na_1a_2}$. The idea of the proof is to rewrite $K$ (up to performing a cobordism of genus 3) as the connected sum of a torus knot and a positive 3-braid in which all generators appear in powers of squares or higher. For the latter, it turns out that there exists small cobordisms to connected sums of torus knots of braid index 3 or less. This will suffice to conclude that $2\upsilon$ and $\s$ agree on $K$ up to a constant that is independent from $n$ and $\beta$ since the same is true on torus knots of braid index 3 or less.

We replace $\beta^na_1a_2$ by braid $(a_1a_2)^{3k}\alpha$ with the same closure, where $\alpha$ is a positive braid and $k$ is an integer that is maximal among all integers $k'$ with the following property: there exists a positive $3$-braid $\alpha'$ such that $(a_1a_2)^{3k'}\alpha'$ has the same closure as $\beta^na_1a_2$.
\begin{Claim}\label{claim:cobtoconnectedsumoftorusknotand}
There is a cobordism of genus $3$ or less from $K$ to $T\sharp K'$ where $T$ is a $T_{3,3k+1}$ torus knot and $K'$ is a knot given as the closure of a $3$-braid of the form
\begin{equation*}\gamma=a_1^{m_1}a_2^{m_2}a_1^{m_3}\cdots a_{2}^{m_{2l}}\end{equation*}
with $m_i\geq 2$ for all $1\leq i\leq 2l$.
In particular,
$|\upsilon(K)
-\upsilon(T\sharp K')|
\leq 3$.\end{Claim}
We delay the proof of this Claim and first apply it.
We aim to show that there is a constant $d$ (independent of $K$) such that
\begin{equation}\label{eq:upsilonK-sK/2isbounded}\left|\upsilon(K)-\frac{\s(K)}{2}\right|\leq d, \end{equation}
which suffices to prove Proposition~\ref{prop:sig=ups} by the definition of $\widetilde{\u}$ and $\widetilde{\frac{\sigma}{2}}$. Indeed,~\eqref{eq:upsilonK-sK/2isbounded} yields that
\[\left|\lim_{n\to\infty}\frac{\upsilon\left(\widehat{\beta^na_1a_2}\right)}{n}-\lim_{n\to\infty}\frac{\s\left(\widehat{\beta^na_1a_2}\right)}{2n}\right|\leq \lim_{n\to\infty}\frac{d}{n}=0\]
for all 3-braids $\beta$.
Equation~\eqref{eq:upsilonK-sK/2isbounded} is established by calculating
$\upsilon(K)$ and $\sigma(K)$ in terms of $k$, $l$, and $\sum_{i=1}^{2l}m_i$ up to a constant that does not depend on $K$.
We start with calculating $\upsilon(K)$; however, in the course of the calculation it will become apparent that this only uses the fact that $|\upsilon|$ is a concordance invariant that bounds the smooth slice genus from below and the values of $\upsilon$ on torus knots of braid index $2$ and $3$. Since $\upsilon$ and $\frac{\s}{2}$ agree on torus knots of braid index $2$ and differ by at most $2$ on torus knots of index $3$, the same calculation (replacing $\upsilon$ by $\frac{\s}{2}$) will yield the same formula for $\frac{\s}{2}$ up to a constant that does not depend on $K$.

We now estimate $\upsilon(K')$. By deleting $\sum_{i=1}^{2l}(m_i-2)$ generators in $\gamma$ and afterwards adding two generators, we can change $\gamma$ to
the $3$-braid $\beta_l=a_1^2a_2^2\cdots a_1^2a_2^2a_1a_2$ of length $4l+2$. In other words, there is a cobordism of genus $\frac{2+\sum_{i=1}^{2l}(m_i-2)}{2}$ from $K'$ to $K_l=\widehat{\beta_l}$. Combining this with~\eqref{eq:lowerboundUps(Kn)} from Example~\ref{Ex:a1a1a2a2}, we find
\begin{equation}\label{eq:K'leq}\upsilon(K')\geq -\frac{2+\sum_{i=1}^{2l}(m_i-2)}{2}-l-1=-\frac{\sum_{i=1}^{2l}m_i}{2}+l-2.\end{equation}

For an upper bound on $\upsilon(K')$, we use the following claim, which we prove at the end of this section.
\begin{Claim}\label{claim:cobK'toconnectsumtorusknots}
Let $\epsilon,\epsilon_i\in\{0,1\}$ be such that $\epsilon +\sum_{i=1}^{l} m_{2i-1}$ is odd and $m_i+\epsilon_i$ is odd for all $i\in\{2,4,6,\cdots,2l\}$.
There exists a cobordism from $K'$ to the knot
\[K''=T_{2,m_1+m_3+\cdots+m_{2l-1}+\epsilon}\sharp T_{2,m_2+\epsilon_2}\sharp T_{2,m_4+\epsilon_4}\sharp\cdots\sharp T_{2,m_{2l}+\epsilon_{2l}}\]
of genus $\frac{\epsilon+(\sum_{i=1}^{2l}\epsilon_{2i})+l-1}{2}$. In particular,
\begin{align*}\upsilon(K')
&\leq\upsilon(K'')+\frac{\epsilon+(\sum_{i=1}^l\epsilon_{2i})+l-1}{2}
\\&=-g(K'')+\frac{(\epsilon+\sum_{i=1}^l\epsilon_{2i})+l-1}{2}
\\&=-\frac{m_1+m_3+\cdots+m_{2l-1}+\epsilon-1+\sum_{i=1}^l(m_{2i}+\epsilon_{2i}-1)}{2}+\frac{\epsilon+(\sum_{i=1}^l\epsilon_{2i})+l-1}{2}
\\&=-\frac{\sum_{i=1}^{2l}m_i}{2}+l.
\end{align*}
\end{Claim}
Using Claim~\ref{claim:cobtoconnectedsumoftorusknotand}, \eqref{eq:K'leq}, and Claim~\ref{claim:cobK'toconnectsumtorusknots}, we calculate that
\begin{equation}\label{eq:upsilonK=-2k+l+summi}
-2k-\frac{\sum_{i=1}^{2l}m_i}{2}+l-6\leq\upsilon(K)\leq -2k-\frac{\sum_{i=1}^{2l}m_i}{2}+l+3.
\end{equation}
Indeed, we have
\begin{align*}
\upsilon(K)
&\geq -3+\upsilon(T\sharp K')
\\&\geq -3+\upsilon(T_{3,3k+1})+\upsilon(K')
\\&\geq -3-2k-1-\frac{\sum_{i=1}^{2l}m_i}{2}+l-2
\\&=-2k-\frac{\sum_{i=1}^{2l}m_i}{2}+l-6,
\end{align*}
where in the first line we used Claim~\ref{claim:cobtoconnectedsumoftorusknotand} and in the second to last line we used~\eqref{eq:K'leq} and~\eqref{eq:upsilonforindex3torusknots}.
Using Claim~\ref{claim:cobK'toconnectsumtorusknots} instead of~\eqref{eq:K'leq},
a similar calculation establishes the second inequality of~\eqref{eq:upsilonK=-2k+l+summi}:
\begin{align*}
\upsilon(K)
&\leq 3+\upsilon(T\sharp K')
\\&\leq 3+\upsilon(T_{3,3k+1})+\upsilon(K')
\\&\leq 3-2k-\frac{\sum_{i=1}^{2l}m_i}{2}+l
\\&=-2k-\frac{\sum_{i=1}^{2l}m_i}{2}+l+3.
\end{align*}

Finally, we note that all the calculations we did above also work for $\frac{\s}{2}$ instead of $\upsilon$ up to a change of constants. This follows from the fact that on torus knots of braid index 3 or less the values of $\frac{\s}{2}$ differ by at most $2$ from $\upsilon$; compare~\eqref{eq:upsilonforindex3torusknots} and~\cite[Proposition~9.1]{Murasugi_OnClosed3Braids}.
In other words,~\eqref{eq:upsilonK=-2k+l+summi} also holds for $\frac{\s}{2}$ instead of $\upsilon$ up to a change of the constants. This yields that \[\left|\upsilon(K)-\frac{\s(K)}{2}\right|=\left|\upsilon\left(\widehat{\beta^na_1a_2}\right)-\frac{\s\left(\widehat{\beta^na_1a_2}\right)}{2}\right|\] is bounded by a constant $d$ that is independent of $\beta$ and $n$.

It remains to prove Claims~\ref{claim:cobtoconnectedsumoftorusknotand} and~\ref{claim:cobK'toconnectsumtorusknots}.
\begin{proof}[Proof of Claim~\ref{claim:cobtoconnectedsumoftorusknotand}]
Recall that we have $k$ maximal such that $\beta^na_1a_2$ has the same closure as $(a_1a_2)^{3k}\alpha$ for some positive $3$-braid $\alpha$. Note that the closure of $\alpha$ is a knot since the closure of $(a_1a_2)^{3k}\alpha$ is. 
Up to conjugation (which does not change the closure of $(a_1a_2)^{3k}\alpha$), we can choose
$\alpha=a_1^{m_1}a_2^{m_2}a_1^{m_3}\cdots a_{2}^{m_{2l}}$ for some positive integer $l$ and positive integers $m_i$. We choose $l$ minimal. So, for example, $\alpha=a_1a_2a_1a_2$ is not considered since it is isotopic to $a_1a_1a_2a_1$, which is conjugate to $a_1a_1a_1a_2$.

With the exception of the case $\alpha=a_1a_2$, which implies $\beta^na_1a_2$ has the same closure as $(a_1a_2)^{3k+1}$, we show that $m_i\geq2$ for all but one $i\in\{1,\cdots, 2l\}$.
If $l=1$, $\alpha=a_1a_2$ is the above mentioned exception. So it remains to consider the case where $l\geq 2$.
Assume towards a contradiction that there exist $i,j$ such that $m_i=m_j=1$.
We first establish that $m_i=m_{i+1}=1$ or $m_1=m_{2l}=1$ is impossible. Indeed, if this were true, we would have $m_2=m_3=1$ or $m_1=m_2=1$ up to conjugation. We discuss the case $m_2=m_3=1$ since the case $m_1=m_2=1$ is similar (in fact, the latter reduces to the former by exchanging $a_1$ and $a_2$ in $(a_1a_2)^{3k}\alpha$, which does not change the closure of $(a_1a_2)^{3k}\alpha$). Since $m_2=m_3=1$, we have
\[\alpha=a_1^{m_1}a_2a_1a_2^{m_4}\cdots=a_1^{m_1+m_4}a_2a_1\cdots,\] which implies that $l$ was not minimal.
Therefore, we have $|i-j|>1$ and $\{i,j\}\neq \{1,2l\}$. This yields that up to cyclic permutation, $\alpha$ equals
\[\Delta\alpha'\Delta\alpha''=\Delta\Delta\overline{\alpha'}\alpha'',\]
where $\alpha'$ and $\alpha''$ are positive 3-braids, $\overline{\alpha'}$ denotes the braid obtained from $\alpha'$ by switching $a_1$ with $a_2$, and $\Delta$ denotes the half-twist $a_1a_2a_1=a_2a_1a_2$. This contradicts the maximality of $k$ since $\Delta\Delta=(a_1a_2)^3$. 

Deleting at most one generator in $\alpha$, we find a positive $3$-braid that up to conjugation equals \[\widetilde{\alpha}=a_1^{m'_1}a_2^{m'_2}a_1^{m'_3}\cdots a_{2}^{m'_{2l'}},\] where $m'_i\geq 2$ for all $i$. By adding at most one generator, we may assume that the closure of $\widetilde{\alpha}$ is a knot.

In conclusion we have that by adding or deleting at most two generators in $(a_1a_2)^{3k}\alpha$, we may assume that all $m_i\geq 2$. Therefore, there exists a genus 1 cobordism from $\beta^na_1a_2$ to $(a_1a_2)^{3k}\gamma$ where $\gamma=a_1^{m_1}a_2^{m_2}a_1^{m_3}\cdots a_{2}^{m_{2l}}$ with $m_i\geq 2$. Furthermore, there is a genus 2 cobordism from $\widehat{(a_1a_2)^{3k}\gamma}$ to $\widehat{(a_1a_2)^{3k+1}}\sharp\widehat{\gamma}$. Indeed, two saddle moves turn $\widehat{(a_1a_2)^{3k}\gamma}$ into $\widehat{(a_1a_2)^{3k}}\sharp\widehat{\gamma}$ and adding two generators to $\widehat{(a_1a_2)^{3k}}\sharp\widehat{\gamma}$ yields $\widehat{(a_1a_2)^{3k+1}}\sharp\widehat{\gamma}$. Combining the two cobordisms, we have a genus $3$ cobordism between $K=\widehat{\beta^na_1a_2}$ and $\widehat{(a_1a_2)^{3k+1}}\sharp\widehat{\gamma}$.
\end{proof}
\begin{proof}[Proof of Claim~\ref{claim:cobK'toconnectsumtorusknots}]
We first observe that by performing $l-1$ saddle moves 
the knot
$K'$
can be turned into the link
\begin{equation}\label{eq:sumsof2-n-toruslinks}T_{2,m_1+m_3+\cdots+m_{2l-1}}\sharp T_{2,m_2}\sharp T_{2,m_4}\sharp\cdots\sharp T_{2,m_{2l}}.\end{equation}
This is illustrated in Figure~\ref{fig:K'toconnectedsum}; compare also~\cite[Proof of Proposition~5]{Feller_15_ASharpSignatureBoundForPositiveFour-Braids}.
\begin{figure}[ht]
\centering
\def\svgscale{1.7}
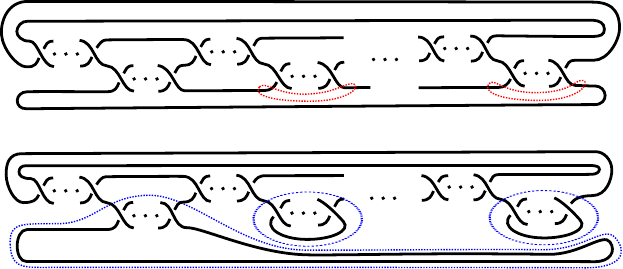
\caption{Top: The knot $K'$ with $l-1$ spheres (red) that indicate where the saddle moves happen.
Bottom: A connected sum of braid index 2 torus links resulting from $l-1$ saddle moves. Splitting spheres (blue) are indicated. 
}
\label{fig:K'toconnectedsum}
\end{figure}
Next we study the summands of~\eqref{eq:sumsof2-n-toruslinks}. Whenever $m_{2i}$ is even, a saddle move turns $T_{2,m_{2i}}$ into $T_{2,m_{2i}+\epsilon_{2i}}$. 
Similarly, if $m_1+m_3+\cdots+m_{2l-1}$ is even, then one saddle move turns $T_{2,m_1+m_3+\cdots+m_{2l-1}}$ into $T_{2,m_1+m_3+\cdots+m_{2l-1}+\epsilon}$. Combined we have that
\[K''=T_{2,m_1+m_3+\cdots+m_{2l-1}+1}\sharp T_{2,m_2+\epsilon_2}\sharp T_{2,m_4+\epsilon_4}\sharp\cdots\sharp T_{2,m_{2l}+\epsilon_{2l}}\]
is obtained from $K'$ by ${\epsilon+(\sum_{i=1}^l\epsilon_{2i})+l-1}$ saddle moves. In particular, there is a cobordism of genus
$\frac{\epsilon+(\sum_{i=1}^l\epsilon_{2i})+l-1}{2}$ between $K'$ and $K''$.
\end{proof}
This concludes the proof of Proposition~\ref{prop:sig=ups}.
\end{proof}

\section{Questions}\label{sec:questions}
\begin{Question}\label{qu:bindex}
Does \[\Ut(K)=-t\tau(K) \for t\leq\frac{2}{n}\] hold for all knots $K$ of braid index $n$ or less?
\end{Question}
Note that, if Question \ref{qu:bindex} can be answered in the positive, then $\Upsilon$ bounds not only the braid index, but the `concordance braid index' of $K$---the minimal braid index of any knot concordant to $K$.
In particular, the quasi-positivity assumption on $K$ in Corollary~\ref{cor:concordancebraidindexdetection} could be dropped.
We now formulate a concordance version of the generalized Jones conjecture, which, if true, also implies that the quasi-positivity assumption on $K$ in Corollary~\ref{cor:concordancebraidindexdetection} could be dropped:
\begin{Question}\label{qu:concordanceGenJonesConj}
For every concordance class $C$ in $\mathfrak{C}$, let $B(C)$ denote the minimal braid index among all the knots in $C$.
Given an $n$-braid $\beta$ and a $B(C)$-braid $\beta'$ both of which have closure in the concordance class $C$, does
\[|\l(\beta)-\l(\beta')|\leq n-B(C)\]
hold for all choices of $C,n,\beta$, and $\beta'$?
\end{Question}

Proposition~\ref{prop:qpbraidindexdetection} (in combination with Lemma~\ref{lem:m_Kforquasipos})
provides a way to detect the braid index
of quasi-positive braid closures.
\begin{Question} Are there examples of quasi-positive knots $L$ where this detects the braid index, not coming from Theorem~\ref{thm:unknottingtotorusknotandbraidindex} (i.e. ~when $L$ does not `contain a full-twist')?
\end{Question}

We invoked the generalized Jones conjecture to be able to use Proposition~\ref{prop:qpbraidindexdetection} to detect braid index. If the following question has a positive answer, then (at least for quasi-positive knots) this can be bypassed.
\begin{Question}\label{qu:braidindex=qpbraidindex}Can the braid index of a quasi-positive knot always be realized as a quasi-positive braid?\end{Question}
It is worth noting that the analog of Question~\ref{qu:braidindex=qpbraidindex} for closures of positive braids was answered in the negative by Stoimenow: there are examples of positive braid knots that have braid index strictly less than the minimal number of strands needed to represent them as positive braids~\cite[Example~7]{Stoimenow_02}.
\begin{Remark}
Question~\ref{qu:braidindex=qpbraidindex} has been answered in the positive by Hayden in~\cite{Hayden16} using braid foliation results from LaFountain and Menasco's proof of the Generalized Jones Conjecture given in~\cite{LaFountainMenasco_14}.
\end{Remark}

\begin{Question}\label{qu:asympt:upsilon=sigma}
Is it true that $\widetilde{\u}=\frac{\widetilde{\s}}{2}$ for all braids?
\end{Question}
For $3$-braids, Proposition~\ref{prop:sig=ups} answers Question~\ref{qu:asympt:upsilon=sigma} in the positive.
By Remark~\ref{rem} and the fact that $\frac{\s}{2}$ is a quasi-morphism with defect $\frac{n-1}{2}$, a positive answer to Question~\ref{qu:asympt:upsilon=sigma} would yield that
\[|\u-\frac{\sigma}{2}|{\leq} n-1\] for the closure of an $n$-braid, which again might be used to detect braid index.
Ozsv\'ath, Stipsicz, and Szab\'o showed that $|\u(K)-\frac{\sigma}{2}(K)|$ is a lower bound for the smooth four-dimensional
crosscap number of a knot $K$\textemdash the minimal first Betti number of (possibly non-oriented) smooth surfaces in $B^4$ with boundary $K\subset S^3$~\cite[Theorem~1.2]{OSS_2015}. If Question~\ref{qu:asympt:upsilon=sigma} has a positive answer, then
 the lower bound for the smooth four-dimensional
crosscap number given by $|\u-\frac{\sigma}{2}|$ can never exceed the braid index minus $1$.
\bibliographystyle{alpha}
\bibliography{upsilonfinal}

\end{document}